\newcommand{\ZZ}{\mathbb{Z}}
\newcommand{\Aut}{\mathrm{Aut}}
\newcommand{\G}{\Gamma}
\newcommand{\U}{\Upsilon}
\newcommand{\tG}{\tilde{\Gamma}}
\renewcommand{\O}{\mathcal{O}}
\renewcommand{\P}{\mathcal{P}}
\newcommand{\Cay}{\mathop{\rm Cay}}
\renewcommand{\S}{\mathrm{Sym}}
\newcommand{\cB}{\mathcal{B}}
\newcommand{\Ml}{\mathop{\rm Ml}}
\renewcommand{\Pr}{\mathop{\rm Pr}}
\newcommand{\GPr}{\mathop{\rm GPr}}
\newtheorem{theorem}{Theorem}[section]
\newtheorem{proposition}[theorem]{Proposition}
\newtheorem{corollary}[theorem]{Corollary}
\newtheorem{lemma}[theorem]{Lemma}
\newtheorem{problem}[theorem]{Problem}
\theoremstyle{definition}
\newtheorem*{remark}{Remark}
\newtheorem{construction}[theorem]{Construction}
\newtheorem{example}[theorem]{Example}
\begin{document}

\begin{center}
\Large{\textbf{Symmetry properties of generalized graph truncations}} \\ [+4ex]
Eduard Eiben{\small$^{a,}$\footnotemark},\ Robert Jajcay{\small$^{b, c, }$\footnotemark$^{,*}$} and Primo\v z \v Sparl{\small$^{c, d, e,}$\footnotemark}
\\ [+2ex]
{\it \small 
$^a$Algorithms and Complexity Group, TU Wien, Vienna, Austria\\
$^b$Comenius University, Bratislava, Slovakia\\
$^c$University of Primorska, Institute Andrej Maru\v si\v c, Koper, Slovenia\\
$^d$University of Ljubljana, Faculty of Education, Ljubljana, Slovenia\\
$^e$Institute of Mathematics, Physics and Mechanics, Ljubljana, Slovenia\\}
\end{center}

\addtocounter{footnote}{-2}
\footnotetext{Supported in part by the Austrian Science Fund (FWF), projects P26696 and W1255-N23.}
\addtocounter{footnote}{1} 
\footnotetext{Supported in part by VEGA 1/0474/15, VEGA 1/0596/17, NSFC 11371307, APVV-15-0220, and by the Slovenian Research Agency research project J1-6720.}
\addtocounter{footnote}{1} 
\footnotetext{Supported in part by the Slovenian Research Agency, program P1-0285 and projects N1-0038, J1-6720 and J1-7051. 

Email addresses: 
eiben@ac.tuwien.ac.at (Eduard Eiben),
Robert.Jajcay@fmph.uniba.sk (Robert Jajcay),
primoz.sparl@pef.uni-lj.si (Primo\v z \v Sparl).

~* corresponding author  }


\hrule

\begin{abstract}
In the generalized truncation construction, one replaces each vertex 
of a $k$-regular graph $\G$ with a copy of a graph $\U$ of order $k$. 
We investigate the symmetry properties of the graphs constructed
in this way, especially in connection to the symmetry properties of the graphs
$\G$ and $\U$ used in the construction. We demonstrate the usefulness of our results 
by using them to obtain
a classification of cubic vertex-transitive graphs of girths $3$, $4$, and $5$. 
\end{abstract}

\hrule

\begin{quotation}
\noindent {\em \small Keywords: truncation, automorphism group, vertex-transitive
graph, girth}
\end{quotation}

\section{Introduction}

The original concept of graph truncations comes from topological 
graph theory. The concept was repeatedly recycled in different 
settings, as in Sachs' classical article \cite{sachs}, in which
it has been used to prove the existence of $k$-regular graphs of girth
$g$ for every pair $ k \geq 3,g \geq 3 $. Its generalization appears under the name
of a zig-zag product in a number of articles dealing with graph
expanders (see, for example, \cite{AloLubWig01,reivad&wig}). The second author 
together with G. Exoo generalized
the original construction of Sachs to allow for truncations
by any graph of the correct order (while Sachs only used truncations by
cycles) \cite{ExoJaj12}. They used the 
generalized truncation construction for constructing small graphs of given
degree and girth.
Several forms of generalized truncations also
appear in \cite{BobJaj&Pis}, which uses them to construct
graphs with prescribed degrees and prescribed beginning of the cycle spectrum.

The paper \cite{ExoJaj12} 
contains several examples
of truncations of arc-transitive maps (which, of course, have arc-transitive
underlying graphs) that result in
vertex-transitive graphs.
The second source of 
interest and motivation for our paper comes from \cite{AlsDob15}, 
in which the authors investigate the symmetry and hamiltonicity of
generalized truncation of graphs via complete graphs $K_n$. 
As both papers \cite{ExoJaj12} and \cite{AlsDob15} start off with a symmetry assumption 
about the underlying graph or the graph that is being attached
to the underlying graph, we investigate the general symmetry properties of 
the graphs resulting from generalized truncation and the impact of the 
symmetry properties of the graphs used in the construction on the symmetry
properties of the resulting graphs. In this sense, our results can be viewed as 
generalizations of the results from \cite{ExoJaj12} and \cite{AlsDob15}.

In addition to being able to obtain results
concerning the symmetry of the graphs resulting from the generalized
truncation, we demonstrate the strength of these ideas
by investigating the class of cubic vertex-transitive graphs of girth $3,4$ and $5$, 
for which we obtain a full characterization.

\section{Preliminaries}
\label{sec:prelim}

Graphs used in our paper are finite and simple. 
Given a graph $\G$, we denote its vertex-set and edge-set by $V(\G)$ and $E(\G)$, respectively. For a vertex $v \in V(\G)$, we let $\G(v) = \{u \in V(\G) \mid u \sim v\}$ denote its neighborhood.

A graph $\G$ is said to be vertex-, edge-, or arc-transitive, if the automorphism
group of $\G$ acts transitively on the set of vertices, edges, or arcs of $\G$.
For a group $G$ and a subset $S \subset G$ closed under inverses but not 
containing the identity, the {\em Cayley graph} $\Cay(G;S)$ of the group $G$ with respect to the {\em connection set} $S$ is the graph with vertex set $G$ in which 
a vertex $g \in G$ is adjacent to a vertex $ h \in G $ if and only if $h = sg$ for some $s \in S$. All Cayley graphs are regular (of degree $|S|$) and vertex-transitive; with the underlying group
$G$ acting vertex-transitively via automorphisms induced by 
the left multiplication by its elements.

The generalized truncation construction also assumes regularity of one of the 
two graphs involved.  
We repeat the definition of the generalized truncation from~\cite{ExoJaj12}. Let $\G$ be a finite $k$-regular graph and let $D(\G)$ denote the set of its darts (that is, ordered pairs of adjacent vertices). A {\em vertex-neighborhood labeling} of $\G$ is a function $\rho \colon D(\G) \to \{1,2,\ldots , k\}$ such that for each $u \in V(\G)$ the restriction of $\rho$ to the set $\{(u,v)\colon v \in \G(u)\}$ of darts emanating from $u$ is a bijection. Furthermore let $\U$ be a graph of order $k$ with $V(\U) = \{v_1, v_2, \ldots , v_{k}\}$. The {\em generalized truncation} $T(\G, \rho; \U)$ of $\G$ by $\U$ with respect to $\rho$ is then the graph with the vertex set $\{(u,v_i) \colon u \in V(\G), 1 \leq i \leq k\}$ and edge set 
\begin{equation}\label{def}
 \{ \; (u,v_i) (u,v_j) \; | \; v_iv_j \in E(\U) \; \} \cup \{ \; (u,v_{\rho(u,w)}) (w,v_{\rho(w,u)}) \; | \; uw \in E(\G) \; \} .
\end{equation}
Informally, the graph $T(\G, \rho; \U)$ is obtained from the $k$-regular $\G$ by 
`cutting out' each vertex $u$ together with a small part of its neighborhood and `glueing in' copies of $\U$ in such a way that the vertex $v_i$ of $\U$ is attached to the dangling dart previously attached to $u$ and labeled $i$. 
Note that $T(\G,\rho;\U)$ is regular if and only if $\U$ is regular.

We will refer to the edges of the form $(u,v_i)(u,v_j)$ (edges originally contained in a copy of $\U$) as {\em red} edges, and to the edges $(u,v_i)(w,v_j)$, $u \neq w$, (originally contained in $\G$) as {\em blue} edges. Observe that a blue edge is incident only to red edges and that each vertex of $T(\G,\rho;\U)$ is incident to precisely one blue edge. This yields the following observation (see also~\cite[Theorem~2.2]{ExoJaj12}).

\begin{lemma}
\label{le:blue_cycle}
Let $\G$ be a finite $k$-regular graph of girth $g$. Then, for any graph $\U$ of order $k$ and any vertex-neighborhood labeling $\rho$ of $\G$, the shortest cycle in the generalized truncation $T(\G,\rho;\U)$ containing a blue edge is of length at least $2g$.
\end{lemma}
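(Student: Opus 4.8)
The plan is to bound the length of any cycle $C$ of $T(\G,\rho;\U)$ that uses at least one blue edge by projecting its blue edges down to a closed walk in $\G$ and then invoking the girth of $\G$. First I would introduce the natural projection $\pi\colon (u,v_i)\mapsto u$ onto $V(\G)$, under which every red edge collapses to a single vertex while every blue edge $(u,v_{\rho(u,w)})(w,v_{\rho(w,u)})$ maps to the edge $uw\in E(\G)$. Writing $m\ge 1$ for the number of blue edges of $C$, the first easy step is the estimate $|C|\ge 2m$: since each vertex of $T(\G,\rho;\U)$ lies on exactly one blue edge, two blue edges can never be consecutive along $C$, so the $m$ blue edges split $C$ into $m$ arcs, each of which must contain at least one red edge.

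Next I would record the sequence of copies of $\U$ visited by $C$ as a cyclic sequence $u_0,u_1,\ldots,u_{m-1}$ with $u_iu_{i+1}\in E(\G)$ (indices read modulo $m$), obtained by reading off $\pi$ along the successive blue edges; this is a closed walk $W$ of length $m$ in $\G$. The key structural point is that $W$ is \emph{cyclically reduced}, i.e.\ $u_{i-1}\ne u_{i+1}$ for every $i$. Inside the copy over $u_i$ the cycle $C$ enters at the vertex $(u_i,v_{\rho(u_i,u_{i-1})})$ and leaves at $(u_i,v_{\rho(u_i,u_{i+1})})$; as $C$ is a cycle these two vertices are distinct, and since $\rho$ restricts to a bijection on the darts emanating from $u_i$ this forces $\rho(u_i,u_{i-1})\ne\rho(u_i,u_{i+1})$, hence $u_{i-1}\ne u_{i+1}$.

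Finally I would convert the existence of a cyclically reduced closed walk $W$ of positive length into the bound $m\ge g$. If $m<g$, then the distinct edges traversed by $W$ form a connected acyclic subgraph of $\G$, since any cycle in $\G$ already has at least $g$ edges; rooting the resulting tree at $u_0$, the cyclic non-backtracking condition forces the distance from $u_0$ to strictly increase along $W$, so $W$ can never return to $u_0$, a contradiction. Hence $m\ge g$, and combining with $|C|\ge 2m$ yields $|C|\ge 2g$, as required. I expect the main obstacle to be the careful bookkeeping behind the cyclic reducedness of $W$---in particular verifying the non-backtracking condition across the seam where $C$ closes up---together with the precise justification that a nontrivial reduced closed walk cannot live inside a forest; the remaining counting is routine.
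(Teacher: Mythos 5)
Your proof is correct and follows essentially the approach the paper has in mind: the paper presents the lemma as an immediate observation from the two facts that every vertex of $T(\G,\rho;\U)$ lies on exactly one blue edge and that a blue edge is incident only to red edges. Your write-up (blue edges never consecutive, hence $|C|\ge 2m$; projection of the blue edges to a non-backtracking closed walk in $\G$; and the forest argument showing such a walk must have length at least $g$) is a careful, correct elaboration of precisely the counting the paper leaves implicit.
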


\section{Basic symmetry properties of generalized truncations}
\label{sec:basic}

One of the main objectives of this paper is to investigate automorphisms of generalized truncations of graphs and their relationship to the automorphisms of 
the underlying truncated graph. Let $\tilde{\G} = T(\G,\rho;\U)$ be the generalized truncation of $\G$ by $\U$ with respect to $\rho$ and let $u \in V(\G)$ be a vertex of $\G$. We let $\tilde{\G}_u$ denote the subgraph of $\tilde{\G}$ induced on the set $\{(u,v_i) \colon 1 \leq i \leq k\}$, that is, $\tilde{\G}_u$ is the copy of $\U$ that replaced the vertex $u$ of $\G$. This gives rise to the {\em natural partition} $\P_\G = \{V(\tilde{\G}_u) \colon u \in V(\G)\}$ of the vertex set of $\tG$, which plays an important role in our investigation of automorphisms of $\tG$.

\begin{lemma}
\label{le:kernel}
Let $\tG = T(\G,\rho;\U)$ be a generalized truncation. The only automorphism of $\tG$ fixing each partition set in $\P_\G$ setwise is the identity.
\end{lemma}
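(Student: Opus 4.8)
The plan is to take an automorphism $\alpha$ of $\tG$ that fixes every set of $\P_\G$ setwise and show that it must fix every vertex. Because $\alpha$ preserves each $V(\tG_u)$ setwise, for every vertex $(u,v_i)$ we have $\alpha((u,v_i)) = (u,v_{i'})$ for some index $i'$ depending on $u$ and $i$; the entire task is to prove that $i' = i$.

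First I would record that $\alpha$ respects the red/blue coloring of the edges, and in fact a blue edge is sent to a blue edge joining the \emph{same} pair of copies. Indeed, a red edge has both endpoints inside a single set $V(\tG_u)$, and these are the only edges lying within one partition set; since $\alpha$ fixes $V(\tG_u)$ setwise, it sends such an edge to another edge inside $V(\tG_u)$, which is again red. A blue edge joins two distinct sets $V(\tG_u)$ and $V(\tG_w)$, and since each endpoint must remain in its own set, $\alpha$ carries it to an edge joining $V(\tG_u)$ to $V(\tG_w)$ once more, hence to a blue edge between the very same two copies.

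The crucial ingredient is the already-noted fact that every vertex of $\tG$ is incident to precisely one blue edge. Fix a vertex $(u,v_i)$ and let $w$ be the unique neighbor of $u$ in $\G$ with $\rho(u,w) = i$; then the unique blue edge at $(u,v_i)$ joins it to the vertex $(w,v_{\rho(w,u)}) \in V(\tG_w)$. Applying $\alpha$, this blue edge is carried to the unique blue edge at $\alpha((u,v_i)) = (u,v_{i'})$, and by the previous paragraph the second endpoint of the image still lies in $V(\tG_w)$. On the other hand, the unique blue edge at $(u,v_{i'})$ joins it to the copy $V(\tG_{w'})$, where $w'$ is the neighbor of $u$ with $\rho(u,w') = i'$. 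Comparing the two descriptions forces $w' = w$, so that $\rho(u,w) = i'$ while also $\rho(u,w) = i$. Since the restriction of $\rho$ to the darts emanating from $u$ is a bijection, this yields $i' = i$.

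Consequently $\alpha$ fixes $(u,v_i)$ for every choice of $u$ and $i$, so $\alpha$ is the identity. I do not expect a genuine obstacle here, as the argument is purely structural and uses neither connectivity of $\G$ nor regularity of $\U$. The one point deserving care is the bookkeeping that the target copy of a blue edge is \emph{rigid} under $\alpha$; it is exactly this rigidity, combined with the injectivity of $\rho$ on the darts at $u$, that rules out any nontrivial reshuffling of the vertices inside a single copy.
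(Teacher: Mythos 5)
Your proof is correct and takes essentially the same approach as the paper: both arguments rest on the facts that every vertex of $\tG$ lies on exactly one blue edge and that a partition-preserving automorphism must carry the blue edge between $\tG_u$ and $\tG_w$ to a blue edge between those same two copies, forcing it to fix that edge and hence its endpoints. The paper states this as uniqueness of the blue edge between any two adjacent copies, whereas you track the indices explicitly via the bijectivity of $\rho$ on the darts at $u$; the content is identical.
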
 

\begin{proof}
Observe that for any $u,w \in V(\G)$ there is either exactly one blue edge or there are no blue edges between the vertices of $\tG_u$ and $\tG_w$, depending on whether $u$ and $w$ are or are not adjacent in $\G$. This means that each $\alpha \in \Aut(\tG)$ fixing setwise each of the sets $\tG_u$, fixes pointwise all the endpoints of the blue edges. Since each vertex of $\tG$ is the endpoint of exactly one blue edge, this proves that $\alpha$ is the identity.
\end{proof}

We can now prove that each subgroup $\tilde{G}$ of $\Aut(\tG)$ leaving $\P_\G$ invariant 
is  isomorphic to a subgroup of $\Aut(\G)$.

\begin{theorem}
\label{the:projects}
Let $\tG = T(\G,\rho;\U)$ be a generalized truncation, and let $\P_\G$ be the natural partition of the vertex set of $\tG$. Let $\tilde{G} \leq \Aut(\tG)$ be any subgroup leaving $\P_\G$ invariant. Then $\tilde{G}$ induces a natural faithful action on $\G$ and is thus isomorphic to a subgroup of $\Aut(\G)$. 
\end{theorem}

\begin{proof}
Let $\tilde{g} \in \tilde{G}$ be arbitrary and let us define a corresponding automorphism $g$ of $\G$. For each $u \in V(\G)$ let $u^g \in V(\G)$ be the unique vertex such that $\tilde{g}$ maps $V(\tG_u)$ to $V(\tG_{u^g})$. Since $\P_\G$ is $\tilde{G}$-invariant, $g$ is a well defined permutation of $V(\G)$. In view of Lemma~\ref{le:kernel}, we only have to prove that $g$ is an automorphism of $\G$. To this end suppose $u$ and $w$ are adjacent vertices of $\G$ and let $(u,v_i)$ and $(w,v_j)$ be the unique vertices of $\tG_u$ and $\tG_w$, respectively, such that $(u,v_i) \sim (w,v_j)$ in $\tG$. Since $(u^g, v_{i'}) = (u,v_i)^{\tilde{g}} \sim (w,v_j)^{\tilde{g}} = (w^g, v_{j'})$ for some $i', j'$, it is clear that $u^g \sim w^g$ in $\G$.
\end{proof}

If $g$ is constructed from $\tilde{g}$ as in the proof of the above theorem, we will say that $g \in \Aut(\G)$ is the {\em projection} of $\tilde{g} \in \Aut(\tG)$ and conversely that $\tilde{g}$ is the {\em lift} of $g$ (observe that, by Lemma~\ref{le:kernel}, each $g \in \Aut(\G)$ can have at most one lift to $\Aut(\tG)$). We will also say that $g$ {\em lifts} to an automorphism of $\tG$ and $\tilde{g}$ {\em projects} onto an automorphism of $\G$. 
Of course, $\Aut(\tG)$ may also contain automorphisms which do not project to $\Aut(\G)$ (we will call such automorphisms {\em mixers}) and there can be automorphisms of $\G$ which do not lift to $\Aut(\tG)$. 
As a consequence of Theorem~\ref{the:projects}, an automorphism $\tilde{g}$ of $\tG$ projects to $\Aut(\G)$ if and only if the partition $\P_\G$ is $\langle \tilde{g} \rangle$-invariant. This observation gives an easy sufficient condition for the entire group $\Aut(\tG)$ to project to $\Aut(\G)$.

\begin{corollary}
\label{cor:small_girth}
Let $\G$ be a $k$-regular graph of girth $g$, and $\tG = T(\G,\rho;\U)$ be a generalized truncation. If $\U$ is connected and each of its edges lies on at least one cycle of length smaller than $2g$, then the entire automorphism group $\Aut(\tG)$ projects injectively onto a subgroup of $\Aut(\G)$.
\end{corollary}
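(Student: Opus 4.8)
The plan is to reduce everything to the invariance of the natural partition $\P_\G$. By the observation following Theorem~\ref{the:projects}, an automorphism of $\tG$ projects to $\Aut(\G)$ precisely when it leaves $\P_\G$ invariant, and the resulting projection is injective by Lemma~\ref{le:kernel}. So it suffices to prove that \emph{every} $\tilde{g} \in \Aut(\tG)$ maps each cell of $\P_\G$ to a cell of $\P_\G$. My strategy is to first show that every automorphism respects the red/blue colouring of edges, and then to recover the cells of $\P_\G$ purely from the red edges, using the connectivity of $\U$.

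For the colour-preservation step I would argue through short cycles. Fix a red edge $e$; it lies in some copy $\tG_u$, which is isomorphic to $\U$. By hypothesis the corresponding edge of $\U$ lies on a cycle of length smaller than $2g$, and since $\tG_u$ is a copy of $\U$ all of whose edges are red, this yields a cycle $C$ through $e$ of length smaller than $2g$ consisting entirely of red edges. For any $\tilde{g} \in \Aut(\tG)$ the image $\tilde{g}(C)$ is again a cycle of length smaller than $2g$. By Lemma~\ref{le:blue_cycle}, any cycle of $\tG$ containing a blue edge has length at least $2g$, so $\tilde{g}(C)$ contains no blue edge; in particular $\tilde{g}(e)$ is red. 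Thus $\tilde{g}$ maps red edges to red edges. Since the set of red edges is finite and $\tilde{g}$ is injective, $\tilde{g}$ permutes the red edges, and it therefore also permutes the blue edges.

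It remains to pass from the edge colouring to the partition. Consider the spanning subgraph of $\tG$ whose edges are exactly the red ones. Because distinct copies $\tG_u$ and $\tG_w$ never share a red edge and each $\tG_u$ is a connected copy of $\U$, the connected components of this red subgraph are precisely the cells $V(\tG_u)$, $u \in V(\G)$, of $\P_\G$. Every $\tilde{g} \in \Aut(\tG)$ preserves the red subgraph, hence permutes its connected components, and therefore leaves $\P_\G$ invariant. Applying Theorem~\ref{the:projects} to $\tilde{G} = \Aut(\tG)$ then yields the desired faithful projection of the whole group $\Aut(\tG)$ onto a subgroup of $\Aut(\G)$.

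The main obstacle is the colour-preservation step, and it is exactly there that both hypotheses are used: the assumption that every edge of $\U$ lies on a cycle shorter than $2g$ is what lets Lemma~\ref{le:blue_cycle} force the image of each red edge to remain red, while the connectivity of $\U$ is what guarantees that the red components coincide with the cells of $\P_\G$ rather than splitting them further. Everything after the colour-preservation step is routine bookkeeping.
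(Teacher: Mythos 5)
Your proposal is correct and is essentially the paper's own proof, spelled out in greater detail: the paper likewise combines Lemma~\ref{le:blue_cycle} with the short all-red cycles through each red edge to conclude that $\P_\G$ is $\Aut(\tG)$-invariant, and then invokes Theorem~\ref{the:projects}. Your explicit colour-preservation step and the identification of the cells of $\P_\G$ with the connected components of the red subgraph (which is exactly where the connectivity of $\U$ enters) simply fill in details that the paper's two-sentence proof leaves implicit.
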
 

\begin{proof}
By Lemma~\ref{le:blue_cycle}, any cycle of $\tG$ containing a blue edge is of length at least $2g$. Since each red edge belongs to a cycle that is shorter than $2g$, the partition $\P_\G$ must be $\Aut(\tG)$-invariant, and the result follows from Theorem~\ref{the:projects}.
\end{proof}

There are many classes of graphs having the property that each edge of the graph is contained in a
short cycle. For example, this must be the case for arc-transitive graphs of small girth, or more generally,
for {\em edge-girth-regular graphs}, which are graphs in which each edge is contained in the same 
number of girth cycles \cite{JajKisMik17}, as well as  for Cayley graphs of abelian or nilpotent groups \cite{ConExoJaj10}.

\begin{corollary}
\label{cor:central}
Let $\U$ be a connected Cayley graph $\Cay(G;S)$ satisfying the property that $S$ contains at least three elements
out of which at least one belongs to the center $Z(G)$, and let $\tG = T(\G,\rho;\U)$ be a generalized truncation.
Then the entire automorphism group $\Aut(\tG)$ projects injectively onto a subgroup of $\Aut(\G)$.
\end{corollary}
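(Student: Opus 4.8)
The plan is to invoke Corollary~\ref{cor:small_girth}, so I need to check its two hypotheses for the Cayley graph $\U = \Cay(G;S)$: that $\U$ is connected (given), and that every edge of $\U$ lies on a cycle of length smaller than $2g$. Since the girth $g$ of $\G$ satisfies $g \geq 3$, the bound $2g \geq 6$ is available, but to be safe I would aim to show that every edge of $\U$ lies on a cycle of length at most $4$, which is smaller than $2g$ for every $g \geq 3$ (as $2g \geq 6 > 4$). The key structural fact I would exploit is that a central element produces short commuting cycles regardless of the rest of the connection set.

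Let me sketch the main computation. Fix a central element $z \in S \cap Z(G)$ and let $s \in S$ be any other element of the connection set; by hypothesis $|S| \geq 3$, so such an $s$ distinct from $z$ and $z^{-1}$ exists whenever we need it, but in fact I only need $s \neq z$. Consider an arbitrary edge of $\U$, which (by vertex-transitivity of $\Cay(G;S)$ and the left-regular action of $G$) may be assumed to join the identity to some generator $s \in S$. I would produce a short cycle through this edge using $z$. Starting at $1$, the vertices $1,\ s,\ zs,\ z$ form a $4$-cycle: the edges are $1 \sim s$ (via $s \in S$), then $s \sim zs$ (via $z \in S$, since $zs = z\cdot s$), then $zs \sim z$ (via $s^{-1} \in S$, since $z = s^{-1}(zs) = z s s^{-1}$ using centrality to reorder), and finally $z \sim 1$ (via $z^{-1} \in S$). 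Centrality of $z$ is exactly what guarantees that $zs = sz$ so that this quadrilateral closes up; without it the four vertices need not form a cycle. This gives a cycle of length $4$ through the chosen edge, provided the four vertices $1, s, zs, z$ are distinct. If $z$ has order $2$ or if $s^2 = z$ etc., the cycle may degenerate, which is the point below.

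The \textbf{main obstacle} is handling degeneracies, i.e.\ verifying that the candidate $4$-cycle does not collapse into a shorter closed walk that fails to be a genuine cycle in a simple graph. The risky coincidences are: (i) $z = s$ or $z = s^{-1}$, which I avoid by choosing $s \neq z^{\pm 1}$ — possible since $|S| \geq 3$; (ii) $zs = 1$, i.e.\ $s = z^{-1}$, again excluded; (iii) $zs = s$, i.e.\ $z = 1$, impossible as $1 \notin S$; (iv) the order-two case $z^2 = 1$, which only affects whether $z$ and $z^{-1}$ coincide but does not shorten the $4$-cycle $1, s, zs, z$. I would treat these as a short case check. The cleanest way to argue is to note that $\{1, z, s, zs\}$ are four distinct group elements precisely because $z \neq 1$, $s \neq 1$, $s \neq z$, and $s \neq z^{-1}$ (the last giving $zs \neq 1$), and these are all secured by the choice $z \in S \cap Z(G)$, $1 \notin S$, $|S| \geq 3$ allowing $s \in S \setminus \{z, z^{-1}\}$.

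Once the short cycle through an arbitrary edge is produced, the conclusion is immediate: every edge of $\U$ lies on a $4$-cycle, hence on a cycle of length $4 < 2g$, so the hypotheses of Corollary~\ref{cor:small_girth} are met and $\Aut(\tG)$ projects injectively onto a subgroup of $\Aut(\G)$. The role of the hypothesis $|S| \geq 3$ is solely to guarantee the existence of a suitable second generator $s$ distinct from $z^{\pm 1}$; the role of centrality is solely to force the quadrilateral to close, and these are the two places where I expect the proof to genuinely use its assumptions.
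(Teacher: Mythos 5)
Your proposal is correct and is essentially the paper's own argument: the paper's proof consists precisely of the observation that the girth $g$ of $\G$ is at least $3$ (so $2g \geq 6$) together with the claim that the hypotheses force every edge of $\U$ onto a $3$- or $4$-cycle, whereupon Corollary~\ref{cor:small_girth} applies; you are simply supplying the commuting-square details that the paper leaves to the reader, and those details (the cycle $1,s,zs,z$, the use of centrality to get $zs \sim z$ via $s^{-1}$, and the distinctness checks) are right. One spot in your case analysis needs a one-line repair: in item (i) you conflate the label of the given edge with the auxiliary generator. If the given edge is $\{1,z\}$, re-choosing $s \in S \setminus \{z,z^{-1}\}$ does work, because the $4$-cycle $1,s,zs,z$ still passes through the edge $\{z,1\}$; but if the given edge is $\{1,z^{-1}\}$ (with $z$ of order greater than $2$), that $4$-cycle does not contain it. This sub-case is closed immediately, either by observing that multiplication by the central element $z$ is an automorphism of $\U$ carrying $\{1,z^{-1}\}$ to $\{1,z\}$, or by running your construction with the central element $z^{-1} \in S$ in place of $z$, which yields the $4$-cycle $1,s,z^{-1}s,z^{-1}$ through $\{1,z^{-1}\}$.
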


\begin{proof}
While the girth $g$ of any $|G|$-regular graph $\G$ is at least $3$, the 
assumptions imply that each edge of $\U$ lies on a $3$-cycle or a $4$-cycle, and so Corollary~\ref{cor:small_girth} applies.
\end{proof}
\medskip

Having addressed the question of when an automorphism of $\tG = T(\G,\rho;\U)$ projects, let us now consider the question of when an automorphism $g \in \Aut(\G)$ lifts to some $\tilde{g} \in \Aut(\tG)$. If such a lift $\tilde{g}$ exists, it is unique by Lemma~\ref{le:kernel}. In fact, we now describe how the action of $\tilde{g}$ (if it exists) is determined by $g$. 
Let $(u,v_i)$ be an arbitrary vertex of $\tG$ and let $(w,v_j)$ be its unique neighbor along a blue edge, that is $i = \rho(u,w)$ and $j = \rho(w,u)$,
and $u$ and $w$ are adjacent in $\G$. 
Since $g \in \Aut(\G)$, the vertices $u^g$ and $w^g$ must also be adjacent in $\G$, and so $(u^g, v_{\rho(u^g, w^g)})$ is adjacent to $(w^g, v_{\rho(w^g,u^g)})$ by the definition of $\tG$. If a lift $\tilde{g}$ of $g$ existed, it would have to send $(u,v_i) = (u,v_{\rho(u,w)})$ to $(u^g, v_{\rho(u^g, w^g)})$ and $(w,v_j) = (w,v_{\rho(w,u)})$ to $(w^g,v_{\rho(w^g,u^g)})$. In this manner, each automorphism $g \in \Aut(G)$ 
yields a unique permutation $\tilde{g}$ of the vertices of $\tG$ which maps blue edges to blue edges. The permutation $\tilde{g}$ may or may not be
a graph automorphism of $\tG$, depending on whether it also maps red 
edges to red edges.
This provides us with a necessary and sufficient condition for an automorphism of $\G$ to lift to $\Aut(\tG)$.

\begin{proposition}
\label{pro:lifts}
Let $\tG = T(\G,\rho;\U)$ be a generalized truncation, and let $g \in \Aut(\G)$. Then $g$ lifts to $\Aut(\tG)$ if and only if for every $u \in V(\G)$ and each pair of its neighbors $w,x$ we have 
\begin{equation}\label{eq:lifts}
	v_{\rho(u,x)} \sim v_{\rho(u,w)} \iff v_{\rho(u^g,x^g)} \sim v_{\rho(u^g,w^g)} \ \text{in}\ \U.
\end{equation}
As a consequence, the set of all $g \in \Aut(\G)$ that lift to $\Aut(\tG)$ is a subgroup of $G$.
\end{proposition}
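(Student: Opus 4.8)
The plan is to exploit the explicit description of the candidate lift that precedes the statement. For a fixed $g \in \Aut(\G)$, the discussion above already produces a unique permutation $\tilde{g}$ of $V(\tG)$ that maps blue edges to blue edges, acting by $(u,v_{\rho(u,w)}) \mapsto (u^g, v_{\rho(u^g,w^g)})$ whenever $w \in \G(u)$. Since any lift of $g$ must coincide with $\tilde{g}$ by Lemma~\ref{le:kernel}, the whole question reduces to deciding when this specific $\tilde{g}$ is a graph automorphism. So the first step is simply to restate the problem as: $g$ lifts if and only if $\tilde{g} \in \Aut(\tG)$.

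Next I would analyze which adjacencies $\tilde{g}$ must respect. By construction $\tilde{g}$ maps each copy $\tG_u$ bijectively onto $\tG_{u^g}$, so it preserves the natural partition $\P_\G$. I would first dispose of the cross-copy pairs: two vertices lying in distinct copies are adjacent precisely when joined by a blue edge, and $\tilde{g}$ already maps the unique blue edge between $\tG_u$ and $\tG_w$ onto the unique blue edge between $\tG_{u^g}$ and $\tG_{w^g}$ when $u \sim w$, while creating no spurious adjacencies when $u \not\sim w$ (then $u^g \not\sim w^g$ as well, so no blue edges exist on either side). Hence cross-copy adjacency is automatically preserved, and the only thing left to check is the behaviour of $\tilde{g}$ on the red edges inside each copy. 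Within $\tG_u$, the vertex $(u,v_{\rho(u,w)})$ is sent to $(u^g, v_{\rho(u^g,w^g)})$, so the pair $(u,v_{\rho(u,w)})(u,v_{\rho(u,x)})$ maps to $(u^g,v_{\rho(u^g,w^g)})(u^g,v_{\rho(u^g,x^g)})$. The latter is an edge of $\tG_{u^g}$ exactly when $v_{\rho(u^g,w^g)} \sim v_{\rho(u^g,x^g)}$ in $\U$. Thus $\tilde{g}$ preserves both adjacency and non-adjacency within every copy if and only if equivalence~(\ref{eq:lifts}) holds for all $u$ and all pairs $w,x$ of neighbours of $u$, which is exactly the asserted characterization.

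For the concluding subgroup statement I would avoid manipulating~(\ref{eq:lifts}) directly and instead use the projection machinery. Observe that the set of lifts is precisely the set of automorphisms of $\tG$ that leave $\P_\G$ invariant: every lift preserves $\P_\G$, and conversely any $\P_\G$-invariant automorphism maps copies to copies and therefore projects, by Theorem~\ref{the:projects}, to the automorphism of $\G$ of which it is the unique lift. Since the $\P_\G$-invariant automorphisms form a subgroup $\tilde{G}$ of $\Aut(\tG)$, and the projection $\tilde{G} \to \Aut(\G)$ is an injective homomorphism by Theorem~\ref{the:projects}, the set of liftable $g$ is the image of this homomorphism and hence a subgroup of $\Aut(\G)$. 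Concretely, if $\tilde{g},\tilde{h}$ are the lifts of liftable $g,h$, then $\tilde{g}\tilde{h}$ and $\tilde{g}^{-1}$ are again $\P_\G$-invariant automorphisms projecting to $gh$ and $g^{-1}$, which are therefore liftable.

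I expect no genuinely hard step here; the work is entirely bookkeeping with the labelling $\rho$. The one place to be careful is the second paragraph: one must confirm that preserving blue edges together with preserving red edges really is enough to conclude $\tilde{g} \in \Aut(\tG)$, i.e. that there are no further adjacencies in $\tG$ beyond the red and blue ones and that the cross-copy non-edges are genuinely preserved. Once that is verified, both implications and the subgroup claim follow immediately.
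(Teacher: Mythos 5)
Your proof is correct and follows essentially the same route as the paper: the paper presents Proposition~\ref{pro:lifts} as an immediate consequence of the preceding discussion, which constructs the unique candidate lift $\tilde{g}$, observes that it automatically maps blue edges to blue edges, and reduces the question to whether it also preserves the red edges --- exactly the argument you spell out. Your derivation of the subgroup claim via Theorem~\ref{the:projects} (identifying the lifts with the $\P_\G$-invariant automorphisms of $\tG$ and taking the image under the injective projection homomorphism) is a clean way of making explicit what the paper leaves unstated.
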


Observe that in the case when $\U$ is a complete graph, the condition (\ref{eq:lifts}) is automatically satisfied for all $ g \in \Aut(\G)$, and so the entire automorphism group of $\G$ lifts (as proved in \cite[Theorem~3.6]{AlsDob15}). We finish this section with an illustration of the above results by means of a concrete example.

\begin{example}
\label{ex:K_5_trunc_C_4}
We consider two non-isomorphic generalized truncations of the complete graph $K_5$ by a $4$-cycle $C_4$. The corresponding vertex neighborhood labelings $\rho_1$ and $\rho_2$ are given in Figure~\ref{fig:K_5trunc1}, where the vertex set of $K_5$ consists of the elements $\{a,b,c,d,e\}$.
\begin{figure}[!h]
\begin{center}
\includegraphics[scale=0.6]{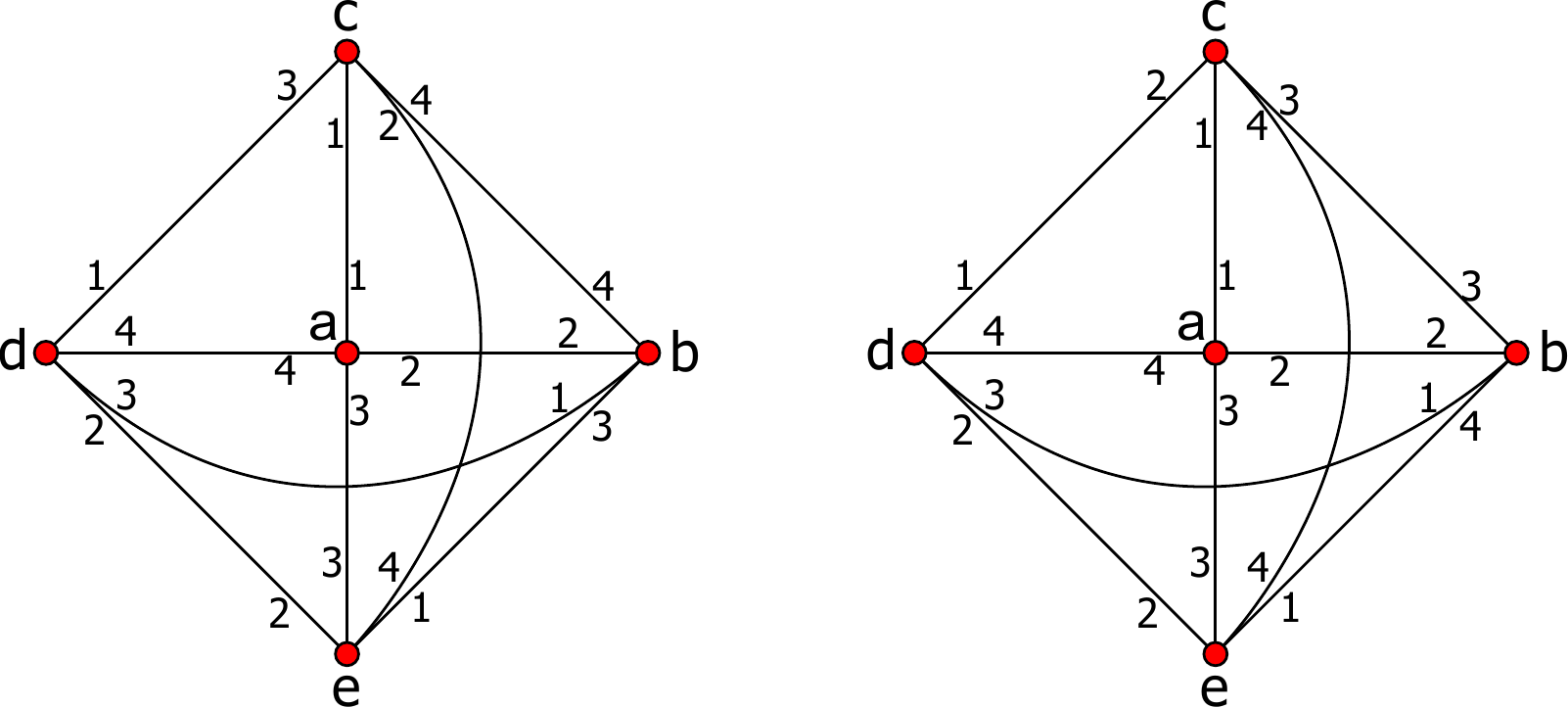}
\end{center}
\caption{The vertex neighborhood labelings $\rho_1$ and $\rho_2$ of $K_5$.}
\label{fig:K_5trunc1}
\end{figure}

If we denote the vertex set of $C_4$ by $\{v_1,v_2,v_3,v_4\}$, with the antipodal pairs of vertices being $\{v_1,v_3\}$ and $\{v_2,v_4\}$, 
the two corresponding generalized truncations are given in Figure~\ref{fig:K_5trunc2}. 
\begin{figure}[!h]
\begin{center}
\includegraphics[scale=0.4]{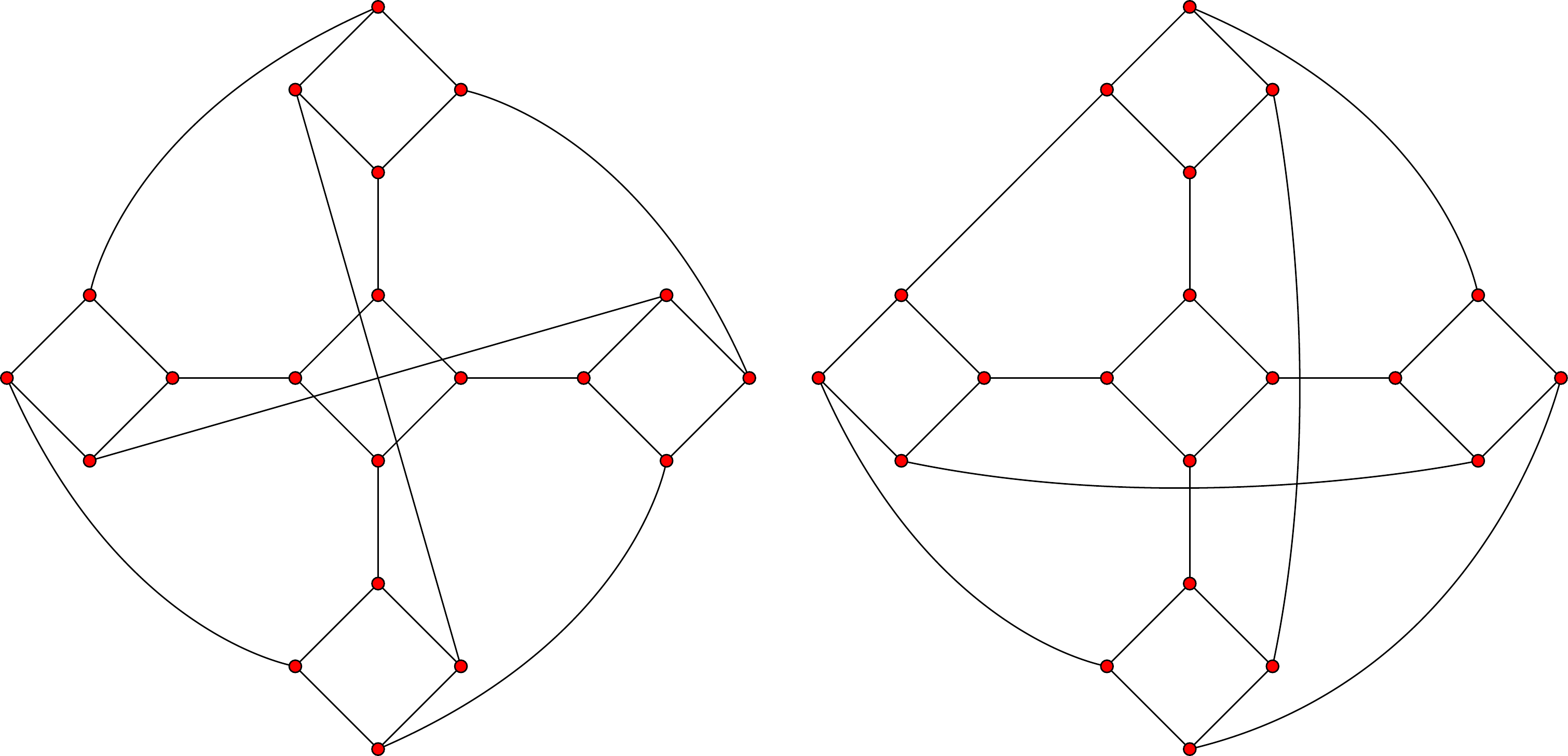}
\end{center}
\caption{Two nonisomorphic generalized truncations of $K_5$ by $C_4$.}
\label{fig:K_5trunc2}
\end{figure}
It is easy to see that the two graphs are non-isomorphic, since the first one does not contain any $6$-cycles, while the second one does. To investigate which of the automorphisms of $K_5$ lift in either of the two generalized
truncations, let $\tG_i = T(K_5,\rho_i; C_4)$, $i \in \{1,2\}$. Observe first, that since both $\tG_i$'s are truncations by a $4$-cycle, Corollary~\ref{cor:small_girth} implies that, in either case, the whole automorphism group $\Aut(\tG_i)$ projects to $\Aut(K_5) = \mathrm{Sym}_5$.
Therefore, $\Aut(\tG_i)$, $i \in \{1,2\}$, coincides with the set of lifts of those 
automorphisms of $K_5$ that do lift with respect to the corresponding 
vertex neighborhood labeling.

We first consider the generalized truncation $\tG_1$. It is not hard to check that the automorphisms $(a\,b\,c\,e\,d)$ and $(b\,c\,d\,e)$ of $K_5$ both satisfy the conditions of Proposition~\ref{pro:lifts}, and so the subgroup $G = \langle (a\,b\,c\,e\,d), (b\,c\,d\,e) \rangle \cong \mathrm{AGL}_1(5)$ of the symmetric group $\mathrm{Sym}_5$ (which is a maximal subgroup of $\mathrm{Sym}_5$) lifts to $\Aut(\tG_1)$. In particular, this implies that
$\tG_1$ is vertex-transitive. On the other hand, the automorphism $g = (a\,b)$ of $K_5$ does not lift to $\Aut(\tG_1)$, since, for instance, $v_{\rho_1(a,c)} = v_1 \sim v_2 = v_{\rho_1(a,b)}$, but $v_{\rho_1(a^g,c^g)} = v_{\rho_1(b,c)} = v_4$ is not adjacent to $v_{\rho_1(a^g,b^g)} = v_{\rho_1(b,a)} = v_2$. This 
proves that the above subgroup $G$ of $\mathrm{Sym}_5$ is the maximal subgroup of the automorphism group of $K_5$ that lifts to $\Aut(\tG_1)$, and so $\tilde{G} = \Aut(\tG_1) \cong \mathrm{AGL}_1(5)$. Note that this implies $\tG_1$ is a Cayley graph of $\mathrm{AGL}_1(5)$.

As for the generalized truncation $\tG_2$, it is easy to check again that the automorphism $(b\,e)(c\,d)$ of $K_5$ satisfies the conditions of Proposition~\ref{pro:lifts}, and hence it lifts to $\Aut(\tG_2)$. On the other hand, none of the automorphisms $(b\,e), (b\,c\,e\,d), (b\,c)(d\,e), (b\,e\,c)$, $(a\,b\,c\,d\,e)$, $(a\,b\,d\,c\,e)$, $(a\,b\,e)$ and $(a\,c\,d)$ satisfies the conditions of Proposition~\ref{pro:lifts}, proving that $\Aut(\tG_2) \cong \ZZ_2$. \hfill $\blacktriangle$
\end{example}

\section{A construction from vertex-transitive graphs}
\label{sec:trunc_from_VT}

In this section, we present a method for constructing generalized truncations which we will later prove in Theorem~\ref{the:VTfromAT} to yield a vertex-transitive graph whenever the truncated graph happens to be arc-transitive.

\begin{construction}
\label{con:T}
Let $\G$ be a graph admitting a vertex-transitive subgroup $G \leq \Aut(\G)$ of automorphisms. For a fixed vertex $v \in V(\G)$, let $\O_v$ be a union of orbits of the action of the stabilizer $G_v$ in its induced action on the 
$2$-element subsets of $\G(v)$. Then $\O_v$, which consists of unordered
pairs of vertices from $\G(v)$, forms a set of edges for 
the graph $(\G(v),\O_v)$, whose vertex set consists of the vertices from $\G(v)$. We define the graph $T(\G,G;\O_v)$ to be the graph with the vertex set $\{(u,w) \colon u \in V(\G), w \in \G(u)\}$, and the adjacency relation in which a vertex $(u,w)$ is adjacent to the vertex 
$(w,u)$ and to all the vertices $(u,w')$ for which there exists a $g \in G$ with the property $u^g = v$ and $\{w,w'\}^g \in \O_v$. 
\end{construction}

Even though the above defined graph $T(\G,G;\O_v)$ might appear different
from the generalized truncation graphs considered so far, in what follows,
we will show that it indeed is a generalized truncation of a vertex-transitive
$\G$ via the orbital graph $ (\G(v),\O_v)$. 

Let us begin by pointing out that a vertex $(u,w)$ is adjacent to $(u,w')$ in $T(\G,G;\O_v)$ if and only if $\{w,w'\}^g \in \O_v$ holds {\em for each} $g \in G$ satisfying $u^g = v$. Namely, if $g, g' \in G$ both satisfy $u^g = v = u^{g'} $, then $g^{-1}g' \in G_v$, and so the fact that $\O_v$ is a union of orbits of the action of $G_v$ on the set of $2$-element subsets of $\G(v)$ implies that $\{w^g,w'^g\} \in \O_v$ if and only if $\{w^g,w'^g\}^{g^{-1}g'} = \{w^{g'}, w'^{g'}\} \in \O_v$.

\begin{lemma}
\label{le:trunc_from_VT}
Let $\G$ be a vertex-transitive graph, and let $G \leq \Aut(\G)$, $v \in V(\G)$, and $\O_v$, be as those in Construction~\ref{con:T}. Then the graph $T(\G,G;\O_v)$ is isomorphic to a generalized truncation of the graph $\G$ by the graph $(\G(v),\O_v)$. 
\end{lemma}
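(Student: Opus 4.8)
The plan is to exhibit an explicit vertex-neighborhood labeling $\rho$ of $\G$ together with an enumeration of the vertices of the graph $\U = (\G(v),\O_v)$, and then to produce a bijection $\phi$ from the vertex set of $T(\G,G;\O_v)$ onto the vertex set of $T(\G,\rho;\U)$ which I will verify to be a graph isomorphism by treating the two edge types separately.

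First, since $\G$ is vertex-transitive it is regular, say of degree $k = |\G(v)|$, so $\U = (\G(v),\O_v)$ is indeed a graph of order $k$; I fix an enumeration $\G(v) = \{v_1,\ldots,v_k\}$. Using vertex-transitivity of $G$, for every $u \in V(\G)$ I choose once and for all an element $g_u \in G$ with $u^{g_u} = v$. As $g_u \in \Aut(\G)$ maps $u$ to $v$, it restricts to a bijection $\G(u) \to \G(v)$, so I may define $\rho(u,w) = i$, where $v_i = w^{g_u}$, for each dart $(u,w)$. For fixed $u$ this is a bijection $\G(u) \to \{1,\ldots,k\}$, hence $\rho$ is a vertex-neighborhood labeling. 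I then set $\phi(u,w) = (u, v_{\rho(u,w)}) = (u, w^{g_u})$, which is a bijection because $w \mapsto w^{g_u}$ is a bijection on each fiber, and which visibly preserves the natural partition (the first coordinate $u$ is unchanged).

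Next I would verify that $\phi$ respects adjacency, handling the two kinds of edges of $T(\G,G;\O_v)$ described in Construction~\ref{con:T}. For an edge of the form $(u,w)(w,u)$, note that $\phi(u,w) = (u, v_{\rho(u,w)})$ and $\phi(w,u) = (w, v_{\rho(w,u)})$, and since $uw \in E(\G)$ this is exactly a blue edge of $T(\G,\rho;\U)$; conversely every blue edge arises this way. For an edge $(u,w)(u,w')$ within a fiber, $\phi$ sends it to $(u, w^{g_u})(u, w'^{g_u})$, which is a red edge of $T(\G,\rho;\U)$ precisely when $\{w^{g_u}, w'^{g_u}\} = \{w,w'\}^{g_u}$ lies in $\O_v$. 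It remains to match this with the adjacency rule of the construction, and this is where the key point lies.

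The crux of the argument is the remark preceding the lemma: because $\O_v$ is a union of $G_v$-orbits, the adjacency $(u,w) \sim (u,w')$ in $T(\G,G;\O_v)$ holds if and only if $\{w,w'\}^{g} \in \O_v$ for the single chosen element $g = g_u$, and not merely for some conveniently chosen $g$. This is exactly what makes the labeling $\rho$ well-behaved despite the non-canonical choice of $g_u$, and it yields the desired equivalence between fiber adjacencies in $T(\G,G;\O_v)$ and red edges in $T(\G,\rho;\U)$. Since in both graphs every vertex has exactly one neighbor outside its fiber and all of its remaining neighbors inside it, matching the two edge types shows that $\phi$ preserves both adjacency and non-adjacency, hence is an isomorphism. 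The main obstacle I anticipate is purely this bookkeeping around the ambiguity of $g_u$; once the remark is invoked to pin down the adjacency via a single representative, the verification is routine.
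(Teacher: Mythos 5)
Your proposal is correct and follows essentially the same route as the paper's proof: choose a representative $g_u \in G$ with $u^{g_u}=v$ for each vertex $u$, define the labeling $\rho(u,w)$ via the image $w^{g_u}$ under a fixed enumeration of $\G(v)$, map $(u,w)\mapsto (u,v_{\rho(u,w)})$, and invoke the remark preceding the lemma (that $\O_v$ being a union of $G_v$-orbits makes the adjacency condition independent of the choice of $g$) to match red edges. The only cosmetic difference is that the paper routes the enumeration through an auxiliary bijection $\varphi$ onto an abstract copy $\U$, whereas you work with $(\G(v),\O_v)$ directly; the substance is identical.
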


\begin{proof}
Let $k = |\Gamma(v)|$ be the cardinality of $\G(v)$, and choose an arbitrary bijection $\varphi \colon \Gamma(v) \to \{1,2,\ldots , k\}$.
Take $\U$ to be the graph with the vertex set $\{v_1, v_2, \ldots , v_k\}$ 
and the adjacency $v_i \sim v_j$ if and only if $ \{ \varphi^{-1}(i),\varphi^{-1}(j)
\} \in \O_v$. Clearly, $\U$ is isomorphic to the orbital graph $(\G(v),\O_v)$.
For each $u \in V(\Gamma)$, choose a $g_u \in G$ with the property
$u^{g_u} = v$. For each pair of adjacent vertices $u,w$ of $\Gamma$ we then set $\rho(u,w) = \varphi(w^{g_u})$. Since $w \sim u$ and $u^{g_u} = v$, we have $w^{g_u} \in \G(v)$, and so the expression $  \varphi(w^{g_u}) $ `makes sense' and these
assignments define a vertex-neighborhood labeling for $\Gamma$. 

We can now define a mapping $\Phi \colon V(T(\G,G;\O_v)) \to V(T(\G,\rho;\U))$ by setting $\Phi(u,w) = (u,v_{\rho(u,w)})$ for each vertex $(u,w)$ of $T(\G,G;\O_v)$. It is easy to check that $\Phi$ is a bijection preserving adjacency. For instance, if $(u,w) \sim (u,w')$ in $T(\G,G;\O_v)$, then by definition and the above remarks, $\{w,w'\}^{g_u} \in \O_v$. Thus $\Phi(u,w) = (u,v_{\rho(u,w)})$ and $\Phi(u,w') = (u,v_{\rho(u,w')})$ are adjacent in $T(\G,\rho;\U)$. This proves that  $T(\G,G;\O_v) \cong T(\G,\rho;\U)$, as claimed.
\end{proof}

In line with the terminology of Section~\ref{sec:basic}, the edges of $\tG = T(\G,G;\O_v)$ of the form $(u,w)(w,u)$ will be called `blue edges' and the edges of the form $(u,w)(u,w')$ will be called `red'. As pointed out in Section~\ref{sec:basic}, $\tG$ is regular
if and only if $(\G(v),\O_v)$ is regular. A sufficient (but not necessary) condition for 
$(\G(v),\O_v)$ being regular is arc-transitivity of the action of $G$ on $\G$. Note also that the elements of the partition $\P_\G$ are $V(\tilde{\G}_u) = \{(u,w)\colon w \in \G(u)\}$. It is now relatively easy to determine which automorphism groups of $\G$ lift to $\Aut(\tG)$.

\begin{proposition}
\label{pro:liftsVT}
Let $\G$, $G$, $\O_v$, and $\tG = T(\G,G;\O_v)$, be as in Construction~\ref{con:T}. 
A subgroup $K \leq \Aut(\G)$, containing $G$, lifts to $\Aut(\tG)$ if and only if $\O_v$ is a union of orbits of the action of $K_v$ on the $2$-sets of elements from 
$\G(v)$. In particular, $G$ itself lifts to $\Aut(\tG)$. 
\end{proposition}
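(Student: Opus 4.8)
The plan is to reduce everything to the concrete truncation $T(\G,\rho;\U)$ furnished by Lemma~\ref{le:trunc_from_VT} and then to invoke the lifting criterion of Proposition~\ref{pro:lifts}. First I would record that the isomorphism $\Phi$ of Lemma~\ref{le:trunc_from_VT}, being given by $\Phi(u,w)=(u,v_{\rho(u,w)})$, preserves the first coordinate and hence carries the natural partition $\P_\G$ of $T(\G,G;\O_v)$ onto that of $T(\G,\rho;\U)$. Conjugation by $\Phi$ therefore induces an isomorphism of automorphism groups that commutes with projection onto $\Aut(\G)$, so a given $g\in\Aut(\G)$ lifts to $\Aut(T(\G,G;\O_v))$ if and only if it lifts to $\Aut(T(\G,\rho;\U))$. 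This lets me work entirely with the labeling $\rho(u,w)=\varphi(w^{g_u})$ and the orbital graph $\U$, in which $v_i\sim v_j$ exactly when $\{\varphi^{-1}(i),\varphi^{-1}(j)\}\in\O_v$.

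The central computation is to rewrite both sides of condition~(\ref{eq:lifts}) as statements about membership in $\O_v$. Using the definitions of $\U$ and $\rho$, the left-hand adjacency $v_{\rho(u,x)}\sim v_{\rho(u,w)}$ is equivalent to $\{w,x\}^{g_u}\in\O_v$, while the right-hand adjacency $v_{\rho(u^g,x^g)}\sim v_{\rho(u^g,w^g)}$ is equivalent to $\{w,x\}^{g\,g_{u^g}}\in\O_v$. Writing $S=\{w,x\}^{g_u}$ and $h_u=g_u^{-1}g\,g_{u^g}$, the second set equals $S^{h_u}$, so (\ref{eq:lifts}) for a fixed $g$ is equivalent to requiring $S\in\O_v \iff S^{h_u}\in\O_v$ for every $u\in V(\G)$ and every $2$-subset $S$ of $\G(v)$; indeed, as $\{w,x\}$ ranges over pairs of neighbours of $u$ and $g_u$ is a bijection $\G(u)\to\G(v)$, the set $S$ ranges over all such $2$-subsets. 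A one-line check gives $v^{h_u}=v$, so $h_u\in(\Aut\G)_v$; and since $g_u,g_{u^g}\in G\leq K$, whenever $g\in K$ we in fact have $h_u\in K_v$.

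With this reformulation both implications are short. If $\O_v$ is a union of $K_v$-orbits on the $2$-subsets of $\G(v)$, then $\O_v$ is $K_v$-invariant, so every $h_u\in K_v$ satisfies $S\in\O_v \iff S^{h_u}\in\O_v$; hence (\ref{eq:lifts}) holds for every $g\in K$ and $K$ lifts. Conversely, assume $K$ lifts and fix $h\in K_v$. Applying the now valid condition to the element $g=h$ at the vertex $u=v$ gives $g_u=g_{u^g}=g_v$ and $h_u=g_v^{-1}hg_v$, yielding $S\in\O_v \iff S^{g_v^{-1}hg_v}\in\O_v$ for all $2$-subsets $S$. Since $g_v\in G_v\leq K_v$, as $h$ runs through $K_v$ so does $g_v^{-1}hg_v$, and we conclude that $\O_v$ is a union of $K_v$-orbits. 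Finally, taking $K=G$ and invoking the construction hypothesis that $\O_v$ is a union of $G_v$-orbits yields the closing assertion that $G$ itself lifts.

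The main obstacle I anticipate is the coset bookkeeping around the element $h_u=g_u^{-1}g\,g_{u^g}$: one must check carefully that it lands in $K_v$ (which is precisely where the hypothesis $G\leq K$ enters), that the sets $S=\{w,x\}^{g_u}$ really exhaust all $2$-subsets of $\G(v)$ so that invariance of $\O_v$ is genuinely forced, and, in the converse direction, that specializing to $u=v$ together with the harmless conjugation by $g_v\in G_v$ recovers every element of $K_v$. Everything else is a direct translation through the definitions of $\U$ and $\rho$.
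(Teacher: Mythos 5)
Your proof is correct. It differs from the paper's argument in scaffolding rather than in substance: the paper never leaves $T(\G,G;\O_v)$ --- it uses Lemma~\ref{le:kernel} and Theorem~\ref{the:projects} to pin down the unique candidate lift $(u,w)^{\tilde g}=(u^g,w^g)$, observes that this permutation automatically preserves blue edges, and then verifies red-edge preservation directly, exploiting the fact that in $T(\G,G;\O_v)$ red adjacency is already phrased as membership in $\O_v$. You instead transport the problem through the isomorphism $\Phi$ of Lemma~\ref{le:trunc_from_VT} and invoke the criterion of Proposition~\ref{pro:lifts}. The group-theoretic heart is identical in both: your element $h_u=g_u^{-1}g\,g_{u^g}$ is exactly the paper's $h^{-1}gh'\in K_v$ (with $h=g_u$, $h'=g_{u^g}$), both arguments need $G\le K$ at precisely this point, and both use that $g_u$ maps $\G(u)$ bijectively onto $\G(v)$ so that the sets $S$ exhaust all $2$-subsets of $\G(v)$. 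Your converse is the direct form of what the paper states as a contrapositive (the paper exhibits a violated red edge inside $\tG_v$ when $\O_v$ fails to be $K_v$-invariant; you specialize (\ref{eq:lifts}) to $u=v$, $g=h\in K_v$, and absorb the harmless conjugation by $g_v\in G_v$). What your route buys is reuse: once Lemma~\ref{le:trunc_from_VT} and Proposition~\ref{pro:lifts} are available, the proposition reduces to a translation exercise; the cost is the extra bookkeeping of checking that $\Phi$ matches up the natural partitions and hence the notion of lifting --- a point you do address correctly, and which is essential, since ``lifts'' is defined relative to the partition and the induced projection.
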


\begin{proof}
By Lemma~\ref{le:kernel} and Theorem~\ref{the:projects}, it suffices to determine whether for each $g \in K$ there is an automorphism $\tilde{g} \in \Aut(\tG)$ such that $\P_\G$ is $\langle \tilde{g}\rangle$-invariant and the induced action of $\tilde{g}$ on $\P_\G$ coincides with the action of $g$ on $\G$. Note that the action of a potential $\tilde{g}$ on $\tG$ is uniquely determined and is very natural. For each vertex $(u,w)$ of $\tG$ we need to have $(u,w)^{\tilde{g}} = (u^g,w^g)$. Since $g$ is an automorphism of $\G$, $\tilde{g}$ constructed in this way is clearly a permutation of the vertex set of $\tG$ preserving the set of the blue edges of $\tG$. 

It thus suffices to determine whether it also preserves the red edges. If $\O_v$ is not a union of orbits of the action of $K_v$ on the $2$-sets of elements from $\G(v)$, then there exists some $g \in K_v$ and some $\{w,w'\} \in \O_v$ for which $\{w,w'\}^g \notin \O_v$. In this case, $\tilde{g}$ is not an automorphism of $\tG$, and $K$ does not lift to $\Aut(\tG)$. Suppose next that $\O_v$ is a union of orbits of the action of $K_v$ on the $2$-sets of elements of $\G(v)$, $g \in K$, $(u,w)(u,w')$ is a red edge of $\tG$, and $h \in G$ is such that $u^h = v$ (and consequently $\{w,w'\}^h \in \O_v$). Since $G$ is transitive on $V(\G)$, there exists some $h' \in G$ such that $u^{gh'} = v$. Then $h^{-1}gh' \in K_v$ (recall that $G \leq K$). Since $\O_v$ is a union of orbits of the action of $K_v$ and $\{w^h,w'^h\} \in \O_v$, it follows that $\{w^g, w'^g\}^{h'} = \{w^h, w'^h\}^{h^{-1}gh'} \in \O_v$, which proves that the vertices $(u,w)^{\tilde{g}}$ and $(u,w')^{\tilde{g}}$ are adjacent in $\tG$.
\end{proof}


\begin{corollary}
\label{cor:liftsVT}
Let $\G$, $G$, $\O_v$, and $\tG = T(\G,G;\O_v)$, be as in Construction~\ref{con:T}. An automorphism $h$ of $\G$ lifts to $\tG$ if and only if $\O_v$ is a union of orbits of the action of $\langle G, h \rangle_v$ on the $2$-sets of elements from $\G(v)$.
\end{corollary}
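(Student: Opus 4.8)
The plan is to deduce this from Proposition~\ref{pro:liftsVT} by specializing to the subgroup $K = \langle G, h \rangle$. Since $G \leq K$, Proposition~\ref{pro:liftsVT} is directly applicable to $K$, and the whole argument amounts to relating the lifting of the single automorphism $h$ to the lifting of the entire group $K$ it generates together with $G$.

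For the backward implication I would argue as follows. Assume $\O_v$ is a union of orbits of the action of $\langle G, h\rangle_v = K_v$ on the $2$-subsets of $\G(v)$. Then Proposition~\ref{pro:liftsVT}, applied to the subgroup $K$ (which contains $G$), tells us that all of $K$ lifts to $\Aut(\tG)$. In particular $h \in K$ lifts, as required.

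For the forward implication, suppose $h$ lifts to $\Aut(\tG)$. By the last sentence of Proposition~\ref{pro:liftsVT}, $G$ itself lifts as well. At this point I would invoke Proposition~\ref{pro:lifts}: since $\tG$ is (isomorphic to) a generalized truncation by Lemma~\ref{le:trunc_from_VT}, the results of Section~\ref{sec:basic} apply, and so the set $L$ of all automorphisms of $\G$ that lift to $\Aut(\tG)$ is a subgroup of $\Aut(\G)$. As both $G \leq L$ and $h \in L$, the generated subgroup $K = \langle G, h\rangle$ is contained in $L$; that is, the whole group $K$ lifts. Applying Proposition~\ref{pro:liftsVT} in the reverse direction to this $K$ then forces $\O_v$ to be a union of orbits of $K_v = \langle G, h\rangle_v$, completing the equivalence.

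I expect the only genuine subtlety to lie in the forward direction, namely the passage from ``$h$ lifts'' to ``$\langle G, h\rangle$ lifts.'' Proposition~\ref{pro:liftsVT} is phrased for subgroups containing $G$ rather than for individual automorphisms, so it does not by itself guarantee that a single lifting element lies in a lifting subgroup. Closing this gap is exactly what the group-closure property of the lifting automorphisms (Proposition~\ref{pro:lifts}) provides, and this is the step I would be most careful to state explicitly.
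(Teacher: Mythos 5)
Your proof is correct and follows exactly the route the paper intends: the corollary is stated there without proof, as an immediate consequence of Proposition~\ref{pro:liftsVT} applied to the subgroup $K = \langle G, h\rangle$. Your handling of the forward direction---invoking the fact that the automorphisms of $\G$ which lift form a subgroup of $\Aut(\G)$ (the closing statement of Proposition~\ref{pro:lifts}, justified via Lemma~\ref{le:trunc_from_VT} and ultimately Theorem~\ref{the:projects} and Lemma~\ref{le:kernel}), so that $h$ lifting and $G$ lifting together force all of $\langle G,h\rangle$ to lift---is precisely the detail the paper leaves implicit, and you have identified and closed it correctly.
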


\section{Vertex-transitive generalized truncations}
\label{sec:VTtrunc}

The results of the previous section yield a very natural construction for vertex-transitive generalized truncations. 

\begin{theorem}
\label{the:VTfromAT}
Let $\G$ be an arc-transitive graph, and let $G \leq \Aut(\G)$ be an arc-transitive 
group of automorphisms. Let $v$ be a vertex of $\G$, let $\O_v$ be a union of orbits of the action of $G_v$ on the $2$-sets of elements from $\G(v)$, and let $\tG = T(\G,G;\O_v)$ be the corresponding generalized truncation. Then $G$ lifts to $\tilde{G} \leq \Aut(\tG)$ which acts vertex-transitively on $\tG$. 
\end{theorem}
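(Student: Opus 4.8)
The plan is to obtain the lift directly from Proposition~\ref{pro:liftsVT} and then to read off vertex-transitivity from the concrete description of how the lifted group acts on $V(\tG)$. The lifting itself requires no new work: applying Proposition~\ref{pro:liftsVT} with $K = G$, the hypothesis that $\O_v$ is a union of orbits of the action of $G_v$ on the $2$-sets of $\G(v)$ is precisely the condition needed, so $G$ lifts to a subgroup $\tilde{G} \leq \Aut(\tG)$. Thus the genuine content of the theorem is the claim that this particular $\tilde{G}$ is transitive on $V(\tG)$.

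To establish transitivity, I would first recall from the proof of Proposition~\ref{pro:liftsVT} the explicit form of the lift: for $g \in G$, the corresponding automorphism $\tilde{g} \in \tilde{G}$ acts on the vertices of $\tG$ by $(u,w)^{\tilde{g}} = (u^g, w^g)$ for every vertex $(u,w)$. The decisive observation is then that, by Construction~\ref{con:T}, the vertex set of $\tG$ is $\{(u,w) \colon u \in V(\G),\ w \in \G(u)\}$, which is exactly the set $D(\G)$ of darts (arcs) of $\G$. Under this identification, the action of $\tilde{g}$ on $V(\tG)$ coincides with the natural action of $g$ on the darts of $\G$, namely the map sending a dart $(u,w)$ to $(u^g, w^g)$.

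With this identification in hand, transitivity of $\tilde{G}$ on $V(\tG)$ reduces to transitivity of $G$ on $D(\G)$. Since a dart is an ordered pair of adjacent vertices, transitivity on darts is exactly arc-transitivity, which holds by hypothesis on $G$. Hence, given any two vertices $(u,w)$ and $(u',w')$ of $\tG$, there is a $g \in G$ mapping the arc $(u,w)$ to the arc $(u',w')$, and its lift $\tilde{g}$ then maps $(u,w)$ to $(u',w')$; this shows that $\tilde{G}$ acts vertex-transitively and completes the argument.

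I do not expect a genuine obstacle here, since the two substantive ingredients---existence of the lift and the fact that it is an automorphism of $\tG$---are already supplied by Proposition~\ref{pro:liftsVT}. The only point deserving care is checking that the lift produced by that proposition really acts on $V(\tG)$ by the dart-permuting rule $(u,w) \mapsto (u^g, w^g)$, rather than by some twisted action; but this is immediate from the explicit construction of $\tilde{g}$ in the proof of Proposition~\ref{pro:liftsVT}, and once it is noted, the identification $V(\tG) = D(\G)$ makes the conclusion transparent.
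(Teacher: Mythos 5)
Your proposal is correct and follows essentially the same route as the paper: the paper also invokes Proposition~\ref{pro:liftsVT} to obtain the lift $\tilde{G}$ and then observes that, since the vertices of $\tG$ are precisely the arcs of $\G$ and the lifted action is $(u,w)^{\tilde{g}} = (u^g,w^g)$, arc-transitivity of $G$ immediately yields vertex-transitivity of $\tilde{G}$. Your write-up merely makes explicit the identification $V(\tG) = D(\G)$ and the form of the lift, which the paper states more tersely.
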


\begin{proof}
By Proposition~\ref{pro:liftsVT}, the subgroup $G$ lifts to $\Aut(\tG)$. Since the action of $G$ on $\G$ is arc-transitive, and the vertices of $\tG$ are ordered pairs of adjacent vertices of $\G$ (i.e., arcs), the action of $\tilde{G}$ on $\tG$ is vertex-transitive.
\end{proof}

Not all vertex-transitive generalized truncations arise from the construction of Theorem~\ref{the:VTfromAT}; we give an example of such a vertex-transitive generalized truncation in the second part of this section. However, in order to find such truncations and to ensure vertex-transitivity, one has to allow for mixers. Moreover, even in the situation of Theorem~\ref{the:VTfromAT}, one can have automorphisms of $\G$ that do not lift to $\Aut(\tG)$ and we can have automorphisms of $\tG$ that do not project to $\Aut(\G)$. In the remainder of this section we give examples of both situations. In view of Corollary~\ref{cor:small_girth}, the girth of the graph $\U = (\G(v),\O_v)$ must be at least twice the girth of $\G$ for the latter possibility to occur. 

\begin{example}
Recall that the truncation $\tG = T(K_5, \rho_1; C_4)$ from Example~\ref{ex:K_5_trunc_C_4} is vertex-transitive. We now show that it can be constructed using Theorem~\ref{the:VTfromAT}. 
If we take the group of automorphisms $G = \langle (a\,b\,c\,e\,d), (b\,c\,d\,e) \rangle$, the set of $2$-sets \[ \O_a = \{\{b,c\},\{c,d\},\{d,e\},\{e,b\}\} \]
consists of
a single orbit of the action of $G_a = \langle (b\,c\,d\,e)\rangle$ on the $2$-sets of elements from $\{b,c,d,e\}$. Since $G$ is a maximal subgroup in $\mathrm{Sym}_5$, and $\O_a$ is clearly not a union of orbits of the action of the stabilizer of $a$ in $\mathrm{Sym}_5$ on the $2$-sets of elements 
from $\{b,c,d,e\}$, Corollary~\ref{cor:small_girth} and Corollary~\ref{cor:liftsVT} imply that the automorphism group of the obtained truncation is actually the lift of $G$, and is thus isomorphic to $G$. Thus,  $\tG$ is an example of a vertex-transitive generalized truncation obtained via Theorem~\ref{the:VTfromAT} for which the maximal subgroup of the truncated graph that lifts is a proper subgroup of its automorphism group. Observe that since $G$ is (up to conjugacy) the only arc-transitive subgroup of $\Aut(K_5)$ which is not $3$-transitive (otherwise the stabilizer of a point has just one orbit on the $2$-sets and we will not get a cycle $C_4$ for the inserted graph $\U$), this is the only vertex-transitive generalized truncation of $K_5$ by $C_4$ that arises in the context of Theorem~\ref{the:VTfromAT}.
\end{example}

\begin{example}
Let us next consider all possible vertex-transitive generalized truncations of the complete graph $K_6$ by $C_5$ which arise in the context of Theorem~\ref{the:VTfromAT}. As argued above, all of them arise from
an arc-transitive subgroup of $\Aut(K_6) = \S_6$ which is not $3$-transitive. 
Only one conjugacy class of such subgroups exists, namely the class consisting 
of subgroups isomorphic to the alternating group $\mathrm{Alt}_5$. The corresponding action of a vertex stabilizer on the $2$-sets affords two orbits giving rise to $C_5$. Both choices result in the same generalized truncation (up to isomorphism), given in Figure~\ref{fig:K_6trunc}.
\begin{figure}[!h]
\begin{center}
\includegraphics[scale=0.45]{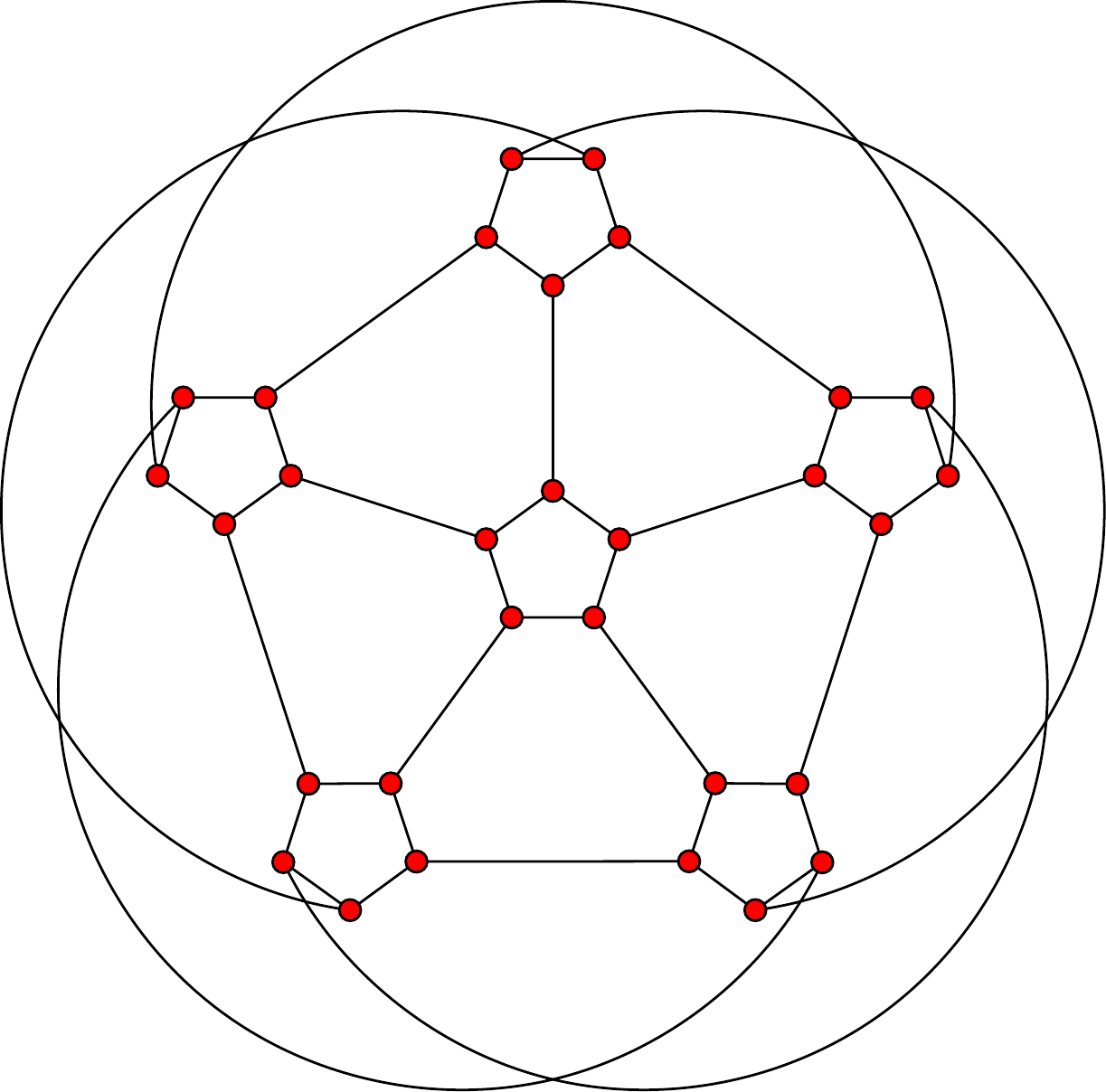}
\end{center}
\caption{The unique vertex-transitive generalized truncation of $K_6$ by $C_5$.}
\label{fig:K_6trunc}
\end{figure}

Since the length of the inserted cycles is $5$, Corollary~\ref{cor:small_girth} and Corollary~\ref{cor:liftsVT} imply that the automorphism group of the obtained generalized truncation is in fact isomorphic to $\mathrm{Alt}_5$. We remark that the generalized truncation from Figure~\ref{fig:K_6trunc} is not isomorphic to the one in~\cite[p. 2614]{ExoJaj12}, since that graph is not vertex-transitive. \hfill $\blacktriangle$
\end{example}

The previous two examples may appear to suggest that for each $n \geq 4$ there exists precisely one (vertex-transitive) generalized truncation of $K_n$ by $C_{n-1}$ that arises in the context of Theorem~\ref{the:VTfromAT}. We show in the remainder of this section that this is far from being true. Of course, the existence of any such generalized truncation requires the existence of a suitable $2$-transitive subgroup $G$ of $\S_n$. Since $G$ would have to be $2$-transitive, the lengths of the orbits of the action of the stabilizer $G_1$ on the $2$-sets from $\{2,3,\ldots , n\}$ would 
be multiples of $n-1$, or multiples of $(n-1)/2$, for odd $n$. But for the inserted graph $\U$ to be isomorphic to $C_{n-1}$, the set $\O_1$ would have to contain precisely two pairs containing $j$ for each $j \in \{2,3,\ldots , n\}$, and so $\O_1$ would have to consist of a single orbit of length $n-1$ or of two orbits of length $(n-1)/2$. In particular, $G$ would have to be of order $n(n-1)$ or $2n(n-1)$. 

This observation allows us to investigate the situation for small values of $n$ using 
{\sc Magma}~\cite{Mag}. The results of our investigation are presented in Table~\ref{tab:K_n}, which lists the parameter $n$ followed by the order $n(n-1)$ of $\tG$, the girth of $\tG$, the order $|\Aut(\tG)|$, and an indication of whether $\Aut(\tG)$ coincides with the lift of $G$. The table includes each generalized truncation $\tG = T(K_n,G;\O_1)$ (up to isomorphism) of a complete graph $K_n$ by a cycle $C_{n-1}$, arising in the context of Theorem~\ref{the:VTfromAT}, up to $n = 20$, 
with the exception of $n = 16$.

\begin{table}
{\small
$$
\begin{array}{c|c|c|c|c}
n  &   \mathrm{order} & \mathrm{girth}(\tG)  &  |\Aut(\tG)|  &  \Aut(\tG) = \tilde{G}\\ \hline
4  &  12  & 3  &  24  &  \mathrm{true}\\
5  &  20  & 4  &  20  &  \mathrm{true}\\
6  &  30  & 5  &  60  &  \mathrm{true}\\
7  &  42  & 6  &  126  &  \mathrm{false}\\
8  &  56  & 7  &  56  &  \mathrm{true}\\
9  &  72  & 8  &  72  &  \mathrm{true}\\
11  &  110  & 10  &  110  &  \mathrm{true}\\
11  &  110  & 10  &  1320  &  \mathrm{false}\\
13  &  156  & 9  &  156  &  \mathrm{true}\\
13  &  156  & 9  &  156  &  \mathrm{true}\\
17  &  272  & 11  &  272  &  \mathrm{true}\\
17  &  272  & 11  &  272  &  \mathrm{true}\\
17  &  272  & 12  &  272  &  \mathrm{true}\\
17  &  272  & 13  &  272  &  \mathrm{true}\\
19  &  342  & 10  &  342  &  \mathrm{true}\\
19  &  342  & 12  &  342  &  \mathrm{true}\\
19  &  342  & 12  &  342  &  \mathrm{true}
\end{array}
$$
\caption{Information on the generalized truncations of $K_n$ by $C_{n-1}$ arising from Theorem~\ref{the:VTfromAT}, where $n \leq 20$, $n \neq 16$.}
\label{tab:K_n}
}
\end{table}

The obtained information reveals that for some $n$ (e.g., $10, 12, 14, 15, 18$, and $20$) there is no generalized truncation of a complete graph $K_n$ by $C_{n-1}$ arising in the context of Theorem~\ref{the:VTfromAT}. On the other hand, there 
exist values of $n$ for which several such generalized truncations exist. Moreover, they 
may be of different girths, and two such generalized truncations may differ in that 
one of them admits mixers while the other does not (see the case $n = 11$). 
It is also interesting to note that one of the truncations of $K_{17}$, all of which
are of order $272$, achieves the girth $13$. The order $272$ is the smallest order for 
which a $3$-regular graph of girth $13$ is known to exist \cite{ExoJaj08}.

We therefore propose the following problem.

\begin{problem}
\label{pro:first}
For each natural number $n$, classify the generalized truncations of the complete graph $K_n$ by the cycle $C_{n-1}$ that arise in the context of Theorem~\ref{the:VTfromAT}.
\end{problem}


Since all generalized truncations of complete graphs $K_n$ by cycles $C_{n-1}$ 
result in cubic graphs, it might prove useful to know 
whether all vertex-transitive generalized truncations of a complete graph $K_n$ by a cycle $C_{n-1}$ arise in the context of Theorem~\ref{the:VTfromAT}.

If $n \leq 6$, the inserted cycle $C_{n-1}$ is of girth less than $6$, and so Corollary~\ref{cor:small_girth} implies that in this case the whole automorphism group of the generalized truncation projects (and is clearly arc-transitive on $K_n$). 
Therefore, none of these
cases can result in a vertex-transitive generalized truncation of $K_n$ by $C_{n-1}$ which does not arise in the context of Theorem~\ref{the:VTfromAT}. The first three lines of Table~\ref{tab:K_n} thus correspond to the only three vertex-transitive generalized truncations of $K_n$ by $C_{n-1}$ for $n$ up to $6$. 

For $n \geq 7$, this approach does not work anymore. Nevertheless, since a vertex-transitive generalized truncation of a complete graph $K_n$ by a cycle $C_{n-1}$ is a vertex-transitive cubic graph of order $n(n-1)$, one can inspect the census of cubic vertex-transitive graphs of order up to 1280 due to Poto\v cnik, Spiga and Verret~\cite{PotSpiVer13}, and search for examples which arise as such truncations. For example, relying on
{\sc Magma}, it can be verified that there are $6$ connected cubic vertex-transitive graphs of order $42$ and girth $6$, but only one of them is a generalized truncation of $K_7$ by $C_6$ (the graph is given in Figure~\ref{fig:K_7trunc}). This graph must therefore
correspond to row 4 of Table~\ref{tab:K_n}, and arise via Theorem~\ref{the:VTfromAT}. The maximal subgroup of $\S_7$ that lifts is of order $42$ (and so the obtained graph is in fact a Cayley graph), while the full automorphism group of the truncation is of order $126$. This implies that this generalized truncation allows mixers. 

\begin{figure}[!h]
\begin{center}
\includegraphics[scale=0.4]{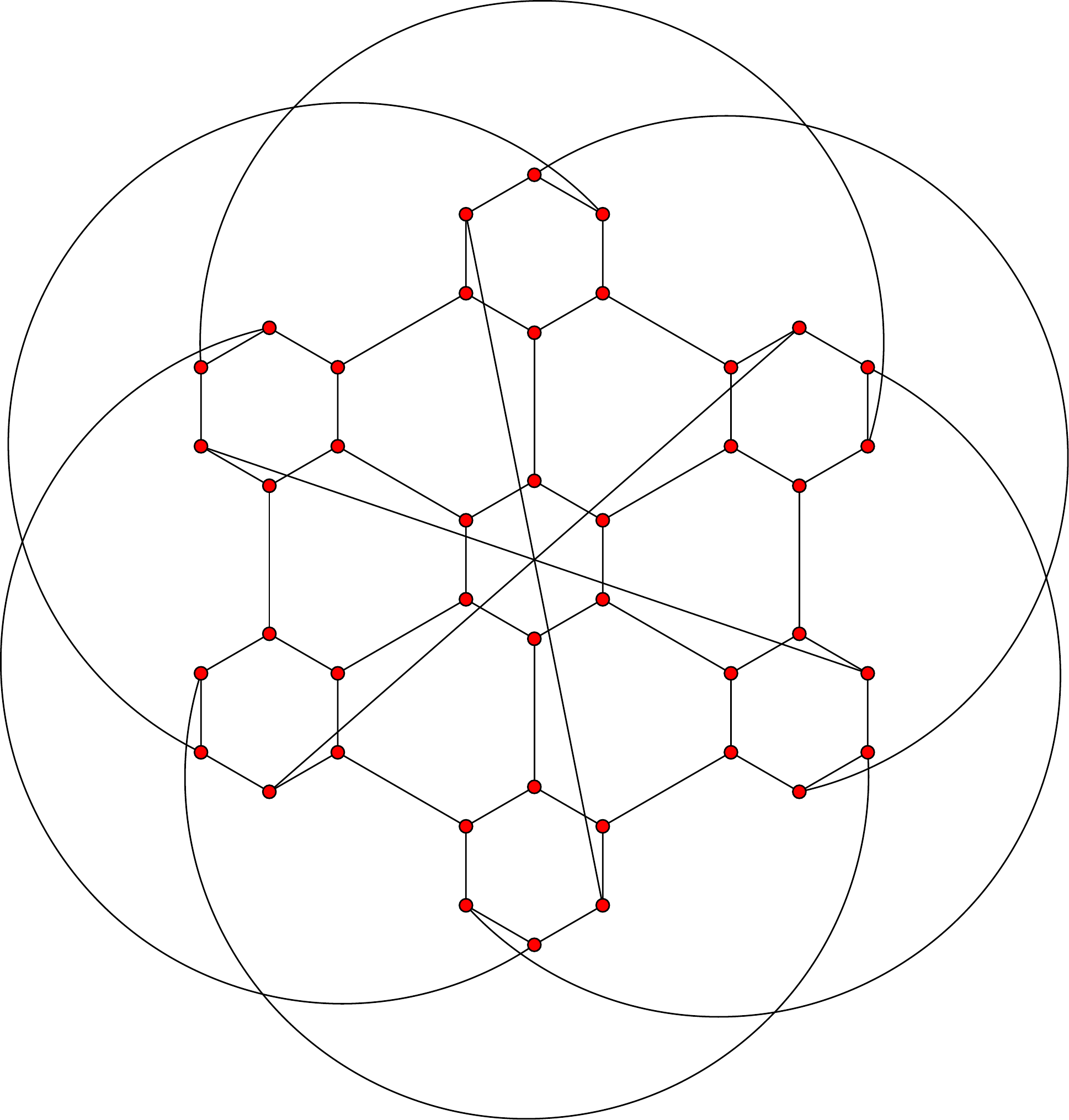}
\end{center}
\caption{The unique vertex-transitive generalized truncation of $K_7$ by $C_6$.}
\label{fig:K_7trunc}
\end{figure}

For $n = 8$, we find that there are two connected vertex-transitive cubic graphs of order $56$ possessing $7$-cycles (both of which are in fact of girth $7$). Only one of them is a generalized truncation of $K_8$ by $C_7$, which
therefore has to correspond to row 5 of Table~\ref{tab:K_n} and arise via Theorem~\ref{the:VTfromAT}. The situation changes for $n = 9$. Namely, there exist two connected cubic vertex-transitive graphs of order $72$ which are generalized truncations of $K_9$ by $C_8$. One of them corresponds to row~6 of Table~\ref{tab:K_n}, while the other one does not arise via Theorem~\ref{the:VTfromAT}. It is the Cayley graph of the group 
$$
G = \langle \; a,b,c \mid a^2, b^2, c^2, acabcbcb, abcacbcacb, (ac)^6 \; \rangle
$$ 
with respect to the connection set $S = \{a,b,c\}$. The graph contains 
nine pairwise disjoint $8$-cycles corresponding to the relation $acabcbcb = 1$, which represent the nine $8$-cycles inserted into $K_9$ to construct the graph as a generalized truncation of $K_9$ by $C_8$. Since this graph does not arise in the context of Theorem~\ref{the:VTfromAT}, it follows that its automorphism group (which coincides with $G$) does not project. This example shows that there exist vertex-transitive truncations of complete graphs by cycles which do not arise via Theorem~\ref{the:VTfromAT}. We therefore generalize Problem~\ref{pro:first} to a wider setting.

\begin{problem}
For each natural number $n$ classify all vertex-transitive generalized truncations of the complete graph $K_n$ by the cycle $C_{n-1}$.
\end{problem}

We finish this section with a useful result, giving a sufficient condition for a vertex-transitive graph to be a generalized truncation obtained via Theorem~\ref{the:VTfromAT}. This result is used in Section~\ref{sec:cubVT} to obtain a characterization of all cubic vertex-transitive graphs of girth at most $5$.

\begin{theorem}
\label{the:VTtrunc}
Let $\Gamma$ be a vertex-transitive graph possessing 
a vertex-transitive group $G$ of automorphisms admitting a nontrivial imprimitivity block system $\mathcal {B}$ on $V(\Gamma)$. If 
there exists a block $B \in \cB$ with the property that each vertex of $B$ has exactly one neighbor outside $B$ and no two vertices of $B$ have a neighbor in the same $B' \in \cB$, $B' \neq B$, then $\Gamma$ is a generalized truncation of an arc-transitive graph by a vertex-transitive graph in the sense of Theorem~\ref{the:VTfromAT}. 
\end{theorem}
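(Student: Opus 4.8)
The plan is to realize $\G$ as a generalized truncation of its quotient graph modulo $\cB$. Write $\hat\G = \G/\cB$ for the graph whose vertices are the blocks of $\cB$, two distinct blocks being adjacent precisely when $\G$ contains an edge joining them. Since $\cB$ is $G$-invariant and $G$ is vertex-transitive, $G$ permutes the blocks transitively and acts on $\hat\G$ as a group of automorphisms; let $\hat G \le \Aut(\hat\G)$ denote its image. The two hypotheses on the block $B$ --- that each vertex of $B$ has a unique neighbor outside $B$, and that distinct vertices of $B$ send their outside-edges to distinct blocks --- carry over to every block by transitivity on $\cB$, and together they say that the assignment $\beta_B\colon v\mapsto$ (the block containing the unique outside-neighbor of $v$) is a bijection from $B$ onto $\hat\G(B)$. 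In particular $\hat\G$ is regular of valency $|B|$, exactly one edge runs between any two adjacent blocks, and each vertex $v$ of $\G$ is recorded faithfully by the pair $(B,\beta_B(v))$, which is exactly the data indexing the vertices of a truncation $T(\hat\G,\hat G;\O)$ via Construction~\ref{con:T}.

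First I would establish that $\hat\G$ is arc-transitive under $\hat G$, which is what Theorem~\ref{the:VTfromAT} requires. Transitivity on the vertices of $\hat\G$ is transitivity on blocks, already noted. For the local action, recall the standard fact that a transitive group induces a transitive action of a block stabilizer $G_B$ on its block $B$. The map $\beta_B$ is $G_B$-equivariant (an automorphism carries the outside-neighbor of $v$ to the outside-neighbor of $v^g$, hence its block to the image block), so $G_B$ transitive on $B$ forces $\hat G_B$ transitive on $\hat\G(B)$; combined with vertex-transitivity this yields arc-transitivity of $\hat G$ on $\hat\G$. (The same equivariance shows the kernel of $G\to\hat G$ is trivial, so in fact $\hat G\cong G$, though this is not needed.)

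Next I would fix a base block $v_0$ and define $\O$ to be the set of $2$-subsets $\{B',B''\}\subseteq\hat\G(v_0)$ for which $\beta_{v_0}^{-1}(B')$ and $\beta_{v_0}^{-1}(B'')$ are adjacent in $\G$; that is, $\O$ transfers, through $\beta_{v_0}$, the edge set of the subgraph of $\G$ induced on $v_0$. Because $\hat G_{v_0}$ preserves this induced subgraph and $\beta_{v_0}$ is equivariant, $\O$ is invariant under $\hat G_{v_0}$, hence a union of orbits on $2$-sets, as demanded by the construction; the resulting orbital graph $(\hat\G(v_0),\O)$ is the vertex-transitive graph by which we truncate. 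Finally I would check that $\Phi\colon v\mapsto (B,\beta_B(v))$ is a graph isomorphism $\G \to T(\hat\G,\hat G;\O)$. It is a bijection by the preceding paragraph; an edge of $\G$ inside a block $B$ maps to a red edge, since adjacency inside $B$ can be transported by an element of $G$ carrying $B$ to $v_0$ and is then read off $\O$ (using the remark after Construction~\ref{con:T} that a single such group element suffices), while an outside-edge $v\sim w$ maps to the blue edge $(B,\beta_B(v))(\beta_B(v),B)$, because $w$ is the unique outside-neighbor of $v$ and, symmetrically, $v$ is the unique outside-neighbor of $w$. As every edge of $\G$ and of the truncation is of one of these two types, $\Phi$ is an isomorphism, and Theorem~\ref{the:VTfromAT} then exhibits $\G$ as a vertex-transitive truncation of the arc-transitive $\hat\G$ in the required sense.

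I expect the main obstacle to be the bookkeeping around the blue edges and the correct identification of vertices: one must verify that the outside-neighbor relation is genuinely symmetric --- that the unique outside-neighbor $w$ of $v$ has $v$ as its own unique outside-neighbor --- so that the pair $(B,\beta_B(v))$ and its partner $(\beta_B(v),B)$ really are matched by a blue edge, and that $\Phi$ is well defined independently of the choice of a group element $g$ with $B^g=v_0$. Establishing arc-transitivity of $\hat\G$ (rather than mere vertex-transitivity) is the other point that has to be argued with care, since it is exactly the hypothesis needed to invoke Theorem~\ref{the:VTfromAT}; everything else reduces to routine verification that $\Phi$ respects the red/blue edge dichotomy.
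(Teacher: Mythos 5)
Your proposal is correct and takes essentially the same route as the paper: form the quotient $\Gamma_\cB$, show the induced action of $G$ on it is arc-transitive, let $\O$ be the transfer (via the outside-neighbor bijection) of the edge set of the subgraph induced on a block, and check that the natural vertex correspondence $v \mapsto (B,\beta_B(v))$ is an isomorphism onto $T(\Gamma_\cB,G;\O)$. The only cosmetic difference is in the arc-transitivity step, which you derive from transitivity of the block stabilizer on its block together with equivariance of $\beta_B$, whereas the paper maps arcs to arcs directly using vertex-transitivity of $G$ on $\Gamma$ and uniqueness of inter-block edges --- the same underlying idea.
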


\begin{proof}
Let $\Gamma_\cB$ be the quotient graph with respect to $\cB$, that is, the graph with vertex set $\cB$ and the adjacency of vertices $B,B' \in \cB$ determined by the existence of a pair of vertices $v \in B$ and $v' \in B'$
adjacent in $\Gamma$. Since every vertex of $B$ has precisely one neighbor outside $B$ and no two vertices of $B$ have a neighbor in the same $B' \neq B$, the valence of $B$ in $\Gamma_\cB$ is $|B|$, and since $\Gamma_\cB$
is vertex-transitive, it is $|B|$-regular. Consider the natural induced action of $G$ on $\Gamma_\cB$. Using the same argument as in the proof of Lemma~\ref{le:kernel}, we see that this action is faithful, and so we can identify $G$ with a subgroup of $\Aut(\G_\cB)$.  While the action of $G$ on $\cB$ is necessarily vertex-transitive, we can show that it is 
also arc-transitive. To argue this, let $(B_1,B_2)$ be an arbitrary arc of $\Gamma_\cB$.  Due to our assumptions, there exists a unique pair of vertices $u_1 \in B_1$ and $u_2 \in B_2$ such that $u_1 \sim u_2$ in $\Gamma$. Let $B_3$ and $B_4$ be another pair of adjacent vertices in $\Gamma_\cB$, and let $u_3 \in B_3$, $u_4 \in B_4$, be their unique pair of 
vertices adjacent in $\Gamma$. Since $G$ is transitive on $\Gamma$, there exists $g \in G$ mapping $u_1$ to $u_3$. Consequently, the induced action of $g$ maps $B_1$ to $B_3$. Since $u_2$ is the unique neighbor of $u_1$ outside $B_1$ and $u_4$ is the unique neighbor of $u_3$ outside $B_3$, it follows that $g$ maps $u_2$ to $u_4$, and therefore also $B_2$ to $B_4$. It thus maps the arc $(B_1,B_2)$ to the arc $(B_3,B_4)$, proving that $G$ induces an arc-transitive action on $\Gamma_\cB$, as claimed. 

Fix an arbitrary $B \in \cB$ and denote by $\U$ the subgraph of $\Gamma$ induced on $B$. Since $G$ is vertex-transitive and $\cB$ is an imprimitivity block system for $G$, the graph $\U$ is vertex-transitive and independent of the particular choice of $B \in \cB$. By assumption, $\U$ is a $(k-1)$-regular graph of order $|B|$, where $k$ is the valence of $\Gamma$. We now prove that $\Gamma$ is a generalized truncation of $\Gamma_\cB$ with respect to a suitable union $\O_B$ of orbits of the action of $G_B$ on $2$-sets of neighbors of $B$ in $\Gamma_\cB$. Recall that each $v \in B$ uniquely determines the block $B_v \in \cB$ such that $B \sim B_v$ in $\Gamma_\cB$. Let $\O_B$ be the set of all pairs $\{B_u, B_v\}$ such that $u \sim v$ in $\Gamma$ (and thus in $\U$). Clearly, $\O_B$ is a union of orbits of the action of $G_B$ on the $2$-sets of neighbors of $B$ in $\Gamma_\cB$ and the graph corresponding to $\O_B$ is isomorphic to $\U$. It is now easy to see that $\Gamma$ is isomorphic to the generalized truncation $T(\G_\cB, G; \O_B)$.
\end{proof}

\section{Characterization of cubic vertex-transitive graphs of girths $3$, $4$, and $5$}
\label{sec:cubVT}

In this section, we apply the results obtained in the previous sections to obtain
a characterization of cubic vertex-transitive graphs of girth at most $5$. Cubic arc-transitive graphs of small girth have been extensively studied (see, for example, \cite{ConNed07, FenNed06, KutMar09, Mil71}). It is well known that there are only five connected cubic arc-transitive graphs of girth at most $5$; there is one of girth $3$ (the complete graph $K_4$), two of girth $4$ (the complete bipartite graph $K_{3,3}$ and the cube $Q_3$), and two of girth $5$ (the Petersen graph and the Dodecahedron graph). The family of connected vertex-transitive cubic graphs is however much richer and it contains infinitely many examples of each of 
the girths $3$, $4$ and $5$ (see the remarks in the following three subsections). It is the aim of this section to find a characterization of these graphs. 

In view of the above remarks, we only need to characterize those vertex-transitive graphs which are not arc-transitive. This might be done along the lines of~\cite{PotSpiVer13} via dividing
the characterization into two cases, one in which the stabilizer of a vertex in the automorphism group of such a graph is trivial, and one in which it has two orbits on the neighbors of the vertex, and then studying the corresponding constructions from~\cite{PotSpiVer13}. We prefer the following much more elementary approach.

\subsection{Girth 3}
\label{subsec:girth3}

We first consider the easiest case, namely graphs of girth $3$. Recall that the {\em prism graph} $\Pr(n)$, $n \geq 3$, is the cubic Cayley graph of the group $\ZZ_2 \times \ZZ_n$ with respect to the connection set $S = \{(1,0),(0,1),(0,n-1)\}$ (these graphs are also known as generalized Petersen graphs $\mathrm{GP}(n,1)$).

\begin{theorem}
\label{the:girth3}
Let $\Gamma$ be a connected cubic graph of girth $3$. Then $\Gamma$ is vertex-transitive if and only if it is either the complete graph $K_4$, the prism $\Pr(3)$, or a generalized truncation of an arc-transitive cubic graph $\Lambda$ by the $3$-cycle $C_3$, in which case $\Aut(\Gamma) \cong \Aut(\Lambda)$.
\end{theorem}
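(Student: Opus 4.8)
The plan is to prove both implications, using the number of triangles through a vertex as the governing structural invariant.

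For the backward direction, note that $K_4$ and $\Pr(3)$ are immediately seen to be cubic, of girth $3$, and vertex-transitive. Now let $\Gamma = T(\Lambda,\rho;C_3)$ be a generalized truncation of an arc-transitive cubic graph $\Lambda$ by $C_3$. Since $\U = C_3 = K_3$ is complete, the lifting condition~(\ref{eq:lifts}) of Proposition~\ref{pro:lifts} holds for \emph{every} $g \in \Aut(\Lambda)$ (any two distinct vertices of $K_3$ are adjacent), so the whole group $\Aut(\Lambda)$ lifts; as the vertices of $\Gamma$ correspond to the arcs of $\Lambda$ and the lifted action realises the action of $\Aut(\Lambda)$ on arcs, arc-transitivity of $\Lambda$ forces $\Gamma$ to be vertex-transitive. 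Moreover each edge of $C_3$ lies on a $3$-cycle and $3 < 2\,\mathrm{girth}(\Lambda)$, so Corollary~\ref{cor:small_girth} shows $\Aut(\Gamma)$ projects injectively into $\Aut(\Lambda)$; since the projection and the (injective) lift are mutually inverse homomorphisms, $\Aut(\Gamma) \cong \Aut(\Lambda)$.

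For the forward direction, let $\Gamma$ be connected, cubic, vertex-transitive of girth $3$, and let $t$ be the number of triangles through a vertex, which is constant and at least $1$ by vertex-transitivity, hence $t \in \{1,2,3\}$. If $t = 3$, the three neighbours of a vertex are pairwise adjacent, forcing a $K_4$-component and thus $\Gamma = K_4$ by connectivity. I would next exclude $t = 2$: the two triangles through a vertex $v$ must share an edge $va$ (the three edges at $v$ cannot carry two edge-disjoint triangles), so writing them as $vab$ and $vac$ we get that $a$ is adjacent to both $b$ and $c$ and hence has no further neighbour; examining the triangles through $b$ then shows $b$ lies on a single triangle, contradicting $t = 2$.

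This leaves $t = 1$, in which case the triangles are pairwise vertex-disjoint and partition $V(\Gamma)$ into blocks of size $3$; as automorphisms permute triangles, this partition is a nontrivial imprimitivity block system $\cB$ for $G = \Aut(\Gamma)$. Each vertex then has exactly one neighbour outside its block, so each block $B$ sends out three ``link'' edges; let $d \in \{1,2,3\}$ be the number of distinct blocks these reach, which is constant by block-transitivity. Using that the block stabiliser $G_B$ acts transitively on $B$, and hence on the three link edges, I would rule out the unbalanced case $d = 2$: there the links reach one block $B'$ twice and another $B''$ once, and some $g \in G_B$ carries a link into $B'$ to a link into $B''$, whence $g$ fixes $B$ but maps $B'$ to $B''$, forcing the contradiction $2 = e(B,B') = e(B,B'') = 1$. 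If $d = 1$ the three links form a perfect matching to a single block, giving $\Gamma = \Pr(3)$ by connectivity; if $d = 3$ no two vertices of $B$ reach the same block, so the hypotheses of Theorem~\ref{the:VTtrunc} hold with $\U = C_3$, exhibiting $\Gamma$ as a generalized truncation of the arc-transitive cubic quotient $\Lambda = \Gamma_\cB$ by $C_3$, and the automorphism computation of the backward direction yields $\Aut(\Gamma) \cong \Aut(\Lambda)$.

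The main obstacle I anticipate is the exclusion of the two asymmetric configurations $t = 2$ and $d = 2$. Both rely on the same principle — vertex-transitivity makes the block stabiliser act transitively on a block and so on its outgoing edges, which is incompatible with any lopsided distribution of triangles or links — but each needs a careful local argument to confirm that no hidden case survives; everything else is essentially bookkeeping built on Corollary~\ref{cor:small_girth}, Proposition~\ref{pro:lifts}, and Theorem~\ref{the:VTtrunc}.
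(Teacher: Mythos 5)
Your proof is correct and takes essentially the same route as the paper: classify by the number of triangles through a vertex, identify $K_4$ and $\Pr(3)$ as the degenerate outcomes, and in the remaining case use the triangles as an imprimitivity block system so that Theorem~\ref{the:VTtrunc} applies, with Corollary~\ref{cor:small_girth} (plus the lifting of $\Aut(\Lambda)$, which you justify via Proposition~\ref{pro:lifts} and the paper via Theorem~\ref{the:VTfromAT}) giving $\Aut(\Gamma) \cong \Aut(\Lambda)$. The only differences are bookkeeping: you explicitly rule out $t=2$ (which the paper leaves implicit) and dispose of the lopsided case by counting edges between blocks ($d=2$), where the paper instead uses vertex-transitivity directly to force $u_2$ to be adjacent to $u_0$ or $u_1$.
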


\begin{proof}
Both $K_4$ and $\Pr(3)$ are Cayley, and therefore vertex-transitive, cubic graphs of girth $3$.
Theorem~\ref{the:VTfromAT} implies that generalized truncations of arc-transitive cubic graphs 
by $3$-cycles are vertex-transitive graphs of girth $3$ which, by Corollary~\ref{cor:small_girth}, share the automorphism group with the 
underlying arc-transitive graph.

To prove the converse, suppose that $\Gamma$ is vertex-transitive. The vertex-transitivity implies that each vertex of $\Gamma$ either lies on a unique $3$-cycle or lies on three $3$-cycles. In the latter case, $\Gamma \cong K_4$, and so we can assume that each vertex of $\G$ lies on a unique $3$-cycle. This implies that the set of $3$-cycles of $\G$ forms an imprimitivity block system for $\Aut(\G)$. Moreover, each vertex of $\Gamma$ is incident to two edges 
which are part of some $3$-cycle of $\Gamma$, and one edge that does not lie on any $3$-cycle of $\Gamma$. Let us color the edges contained in $3$-cycles red, and the other edges
black, so that all $3$-cycles consist of three red edges. Each automorphism of $\Gamma$ necessarily preserves the colors of the edges. Now, let $(v_0,v_1,v_2)$ be an arbitrary $3$-cycle of $\Gamma$. Since every vertex of $\Gamma$ is incident to exactly one black edge, 
each of $v_i$, $0 \leq i \leq 2$, is adjacent to a unique neighbor $u_i \notin \{v_0, v_1, v_2\}$. 

Suppose that the set $\{u_0, u_1, u_2\}$ is not an independent set of vertices, say $u_0 \sim u_1$. Since $\Gamma$ is vertex-transitive, there exists $g \in \Aut(\Gamma)$ mapping $v_0$ to $v_2$. Then the black edge $u_0v_0$ is mapped to the black edge $u_2v_2$, while the edge $u_1v_1$ is mapped to one of $u_0v_0$ and $u_1v_1$, implying that $u_2$ must be adjacent to $u_0$ or $u_1$. Clearly, $\Gamma \cong \Pr(3)$.

It remains to consider the possibility where $\{u_0, u_1, u_2\}$ is an independent set of vertices. Let $w,w'$ be the two neighbors of $u_0$ different from $v_0$. Since the edges $u_0w$, $u_0w'$ are red, $(u_0,w,w')$ is a $3$-cycle of $\Gamma$, and thus the edge $ww'$ is also red. Since each of the vertices $u_1, u_2$ already has its `black neighbor' $v_1, v_2$, respectively, and each of $w$ and $w'$ already has its two `red neighbors', none of $u_1$ and $u_2$ can be adjacent to any of $w$ and $w'$, and so each vertex of a $3$-cycle $C$ is adjacent to precisely one vertex outside $C$ and no two vertices of $C$ have neighbors in the same $3$-cycle $C'$, different from $C$. Therefore, Theorem~\ref{the:VTtrunc} applies. The
isomorphism $\Aut(\Gamma) \cong \Aut(\Lambda)$ follows from Corollary~\ref{cor:small_girth} and Theorem~\ref{the:VTfromAT}.
\end{proof}

\begin{remark}
It is well known that infinitely many cubic arc-transitive graphs exist. This yields the existence of infinitely many cubic vertex-transitive graphs of girth $3$.
\end{remark}

\subsection{Girth 4}
\label{subsec:girth4}

Next, we focus on cubic vertex-transitive graphs of girth $4$. We first introduce two families of such graphs. 

The first family consists of the well known {\em M\"{o}bius ladders}. The graph $\Ml(n)$, where $n \geq 3$, is the Cayley graph $\Cay(\ZZ_{2n};\{\pm 1, n\})$. It is clear that $\Ml(n)$ is a cubic vertex-transitive graph of girth $4$, for all $ n \geq 3 $. 

The members of the second family are called generalized prisms. For any integer $n \geq 2$, the {\em generalized prism} $\GPr(n)$ is the graph with vertex set $V = \{(i,j)\colon i \in \ZZ_2, j \in \ZZ_{2n}\}$ in which each vertex $(i,j)$ is adjacent to $(i,j+1)$ and in addition $(i,j)$ is adjacent to $(i+1,j+1)$ for all even $j$ (the computations are done modulo $2$ on the first component, and modulo $2n$ on the second component). 
It is not hard to see that generalized prisms are Cayley graphs 
of girth $4$; $\GPr(n) \cong \Cay(\mathrm{D}_{2n};S)$, 
$\mathrm{D}_{2n} = \langle t,r \mid t^2, r^{2n}, (tr)^2 \rangle$,
$S = \{t, tr, tr^n\}$.

The M\"{o}bius ladder $\Ml(5)$ and the generalized prism $\GPr(5)$ are depicted in Figure~\ref{fig:twistPrism}. 
\begin{figure}[!h]
\begin{center}
\includegraphics[scale=0.6]{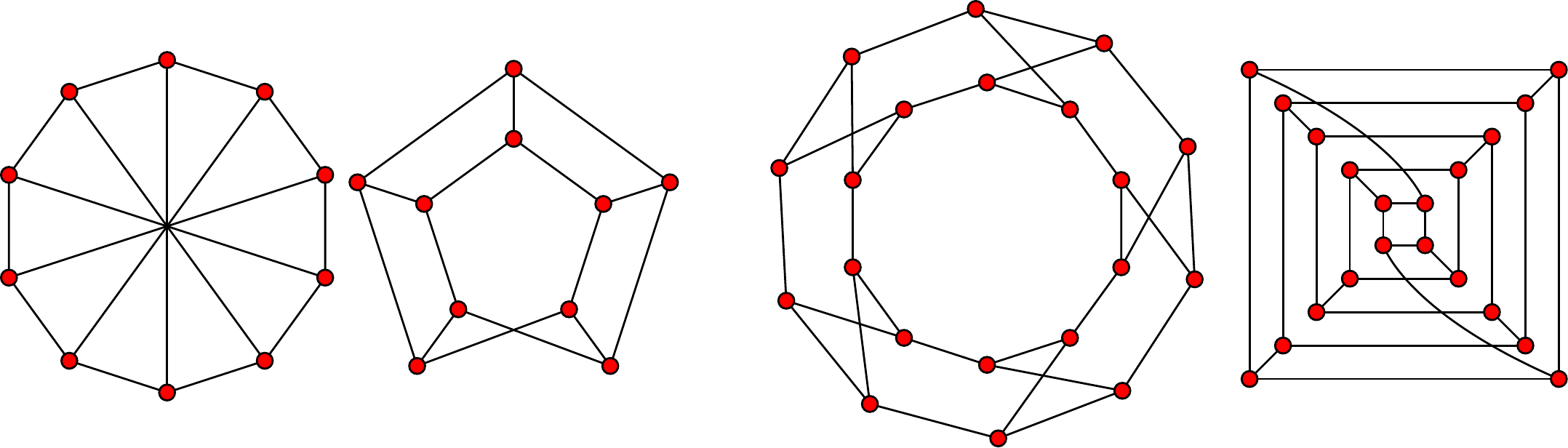}
\end{center}
\caption{Two drawings of each of $\Ml(5)$ and $\GPr(5)$.}
\label{fig:twistPrism}
\end{figure}



We now have the ingredients needed for a characterization of cubic vertex-transitive graphs of girth $4$.

\begin{theorem}
\label{the:girth4}
Let $\Gamma$ be a connected cubic graph of girth $4$ and order $2n$. Then $\Gamma$ is vertex-transitive if and only if it is isomorphic to the prism $\Pr(n)$ with $n \geq 4$, the M\"{o}bius ladder $\Ml(n)$ with $n \geq 3$, the generalized prism $\GPr(\frac{n}{2})$ ($n$ even), or it is isomorphic to a generalized truncation of an arc-transitive tetravalent graph $\Lambda$ by the $4$-cycle $C_4$ in the sense of Theorem~\ref{the:VTfromAT}, in which case $\Aut(\Gamma) \cong \Aut(\Lambda)$.
\end{theorem}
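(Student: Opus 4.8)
The plan is to prove both implications, the forward one being routine and the reverse one resting on a structural dichotomy governed by the $4$-cycles of $\Gamma$. For the forward direction, the prism $\Pr(n)$, the M\"obius ladder $\Ml(n)$ and the generalized prism $\GPr(n/2)$ are explicitly Cayley graphs of girth $4$, hence vertex-transitive, while any generalized truncation of an arc-transitive tetravalent $\Lambda$ by $C_4$ in the sense of Theorem~\ref{the:VTfromAT} is vertex-transitive by that theorem; the claim about its automorphism group is deferred to the end. For the reverse direction, let $\Gamma$ be a connected cubic vertex-transitive graph of girth $4$. Vertex-transitivity makes the number $c$ of $4$-cycles through a vertex a constant, and girth $4$ gives $c\ge 1$; a short count shows that $c=1$ holds exactly when each vertex is incident to a single edge lying on no $4$-cycle, whereas $c\ge 2$ forces every edge to lie on a $4$-cycle. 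This yields the case division: \textbf{(I)} the $4$-cycles of $\Gamma$ are pairwise vertex-disjoint (equivalently each vertex lies on a unique $4$-cycle), and \textbf{(II)} every edge lies on a $4$-cycle (equivalently some two $4$-cycles share a vertex, and hence an edge).

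In case \textbf{(I)} the set $\cB$ of $4$-cycles is an $\Aut(\Gamma)$-invariant partition of $V(\Gamma)$ into blocks of size $4$, nontrivial since there are at least two blocks. Taking $G=\Aut(\Gamma)$, each vertex of a block $B$ is incident to exactly one edge leaving $B$ (its other two edges lie in the $4$-cycle $B$), and the block stabilizer $G_B$ acts transitively on the four vertices of $B$, hence transitively on the four outgoing edges. Therefore every block that receives an outgoing edge of $B$ receives the same number $m$ of them, with $4=m\cdot t$ and $t$ the number of such blocks. Two outgoing edges from \emph{adjacent} vertices of $B$ landing in a common block would close a $4$-cycle meeting two blocks, contradicting disjointness; this excludes $m=4$ and forces any repeated target to be hit by an \emph{antipodal} pair of $B$. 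Thus $m\in\{1,2\}$. If $m=1$, the four targets are distinct and no two vertices of $B$ have neighbours in a common block, so Theorem~\ref{the:VTtrunc} applies and $\Gamma$ is a generalized truncation of the arc-transitive quotient $\Gamma_\cB$ by $C_4$ in the sense of Theorem~\ref{the:VTfromAT}. If $m=2$, each block is doubly joined, through its two antipodal pairs, to exactly two neighbouring blocks; the blocks then form a single cycle and, reconstructing the adjacencies, $\Gamma\cong\GPr(n/2)$ (so $n$ is even, since the number of blocks is $2n/4$).

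In case \textbf{(II)} every edge lies on a $4$-cycle. Since $\Gamma$ is cubic, two $4$-cycles through a common vertex share at least one edge; generically they share exactly one, giving two $4$-cycles through each vertex and a distinguished ``rung'' edge lying on both. The edges lying on two $4$-cycles then form an $\Aut(\Gamma)$-invariant perfect matching $M$, and $\Gamma-M$ is a disjoint union of cycles whose edges are the ``rail'' edges, with each $4$-cycle consisting of two opposite rungs and two opposite rails. Hence $\Gamma$ is a cyclic ladder closing up either as a prism $\Pr(n)$ (two rails) or as a M\"obius ladder $\Ml(n)$ (one rail). The denser configurations, in which some edge lies on more than two $4$-cycles, occur only at small orders and are identified directly as the cube $Q_3\cong\Pr(4)$ and $K_{3,3}\cong\Ml(3)$, both already on the list.

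The main obstacle is case \textbf{(II)}: making the ``ladder closes up'' argument rigorous requires controlling how the rung-matching $M$ propagates around $\Gamma$ and, in particular, ruling out the intermediate possibilities of three, four or five $4$-cycles through a vertex, which a rigidity analysis should show cannot occur in a cubic vertex-transitive graph of girth $4$ apart from $Q_3$ and $K_{3,3}$. A secondary point is the reconstruction in the $m=2$ subcase of (I), where vertex-transitivity together with the antipodal attachment must be used to identify the doubly-joined cyclic arrangement of blocks with $\GPr(n/2)$ rather than a different twisted variant. Finally, for a truncation arising in case (I) the inserted $C_4$ has girth $4<2\,\mathrm{girth}(\Lambda)$, so Corollary~\ref{cor:small_girth} guarantees that $\Aut(\Gamma)$ projects injectively onto a subgroup of $\Aut(\Lambda)$; combined with Theorem~\ref{the:VTfromAT}, this yields the asserted relation $\Aut(\Gamma)\cong\Aut(\Lambda)$, exactly as in the girth-$3$ case.
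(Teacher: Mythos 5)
Your overall architecture matches the paper's: a dichotomy on whether the $4$-cycles of $\Gamma$ are pairwise disjoint, the block system of $4$-cycles plus Theorem~\ref{the:VTtrunc} in the disjoint case, and the ladders, $K_{3,3}$ and the cube in the overlapping case. However, your case \textbf{(I)} contains a step that fails. You exclude two \emph{adjacent} vertices of a block $B$ sending their outgoing edges into a common block $B'$ on the grounds that this ``would close a $4$-cycle meeting two blocks''. That is true only when the two target vertices are adjacent in $B'$; if they are an \emph{antipodal} pair of $B'$, the closed walk produced is a $5$-cycle, and girth $4$ gives no contradiction. This unexcluded configuration is exactly the paper's subcase $z \in \{u_1,u_3\}$, which it eliminates not by girth but by a genuine vertex-transitivity argument: an automorphism carrying $u_0$ to $v_0$ must carry the antipode of $u_0$ on its $4$-cycle to the antipode $v_2$ of $v_0$, and black (matching) edges to black edges, forcing $u_0 \sim u_2$ against the independence of $\{u_0,\ldots,u_3\}$ (which itself needs a prior argument). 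Symmetrically, you never rule out an antipodal pair of $B$ attaching to an \emph{adjacent} pair of $B'$ (the paper's $u_0 \sim u_2$ configuration, killed by a colored-walk argument). Without both exclusions, the hypothesis of Theorem~\ref{the:VTtrunc} (``no two vertices of $B$ have a neighbor in the same block'') is unverified in your $m=1$ case, and your identification of the $m=2$ case with $\GPr(\frac{n}{2})$ rather than a twisted variant is precisely the unproved point, not a ``secondary'' one.

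Your case \textbf{(II)} is a plan rather than a proof, and you say so yourself; but what you defer is the actual content of the case. The paper does it concretely: if two vertices of a $4$-cycle share a neighbor outside it, then every $2$-path lies on a $4$-cycle and $\Gamma \cong K_{3,3} \cong \Ml(3)$; if two pairs of consecutive outside-neighbors of a $4$-cycle are adjacent, $\Gamma \cong \Pr(4)$; otherwise each vertex has one edge on two $4$-cycles (red) and two edges on one $4$-cycle (black), no $4$-cycle contains two consecutive black edges, and the ladder structure, hence $\Pr(n)$ or $\Ml(n)$, follows. Statements like ``generically they share exactly one edge'' and ``the denser configurations occur only at small orders'' are assertions standing in for exactly these arguments. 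Finally, your closing step (Corollary~\ref{cor:small_girth} plus Theorem~\ref{the:VTfromAT} give $\Aut(\Gamma) \cong \Aut(\Lambda)$) is the same as the paper's, so it is not a gap relative to the paper; note, though, that this reasoning only yields an injective projection of $\Aut(\Gamma)$ into $\Aut(\Lambda)$, and surjectivity (that \emph{all} of $\Aut(\Lambda)$ lifts) is automatic for truncation by a complete graph such as $C_3$ but not for $C_4$ --- compare the paper's own truncation $T(K_5,\rho_1;C_4)$, whose full automorphism group is $\mathrm{AGL}_1(5)$, not $\mathrm{Sym}_5$.
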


\begin{proof}
Vertex-transitivity and girth $4$ of the graphs listed in the theorem follow from the preceding remarks
and Theorem~\ref{the:VTfromAT}. 

To prove the converse, suppose that $\Gamma$ is vertex-transitive of girth $4$. 
Let $C = (v_0,v_1,v_2,v_3)$ be an arbitrary $4$-cycle of $\Gamma$. Suppose first that there exists a pair of vertices of $C$ having a common neighbor outside $C$. Since $\Gamma$ is of girth $4$, this can only hold for the pairs $v_0,v_2$ or $v_1, v_3$. Without loss of generality, we may assume that $v_0$ and $v_2$ have a common neighbor $w$ outside $C$. This means, in particular, that all three $2$-paths containing $v_0$ at their center belong to some $4$-cycle. The vertex-transitivity of $\G$ implies the same for each vertex of $\Gamma$, i.e., 
for each vertex $v$ of $\Gamma$ and any two of its neighbors $u,u'$, the $2$-path $(u,v,u')$ lies on a $4$-cycle of $\Gamma$. Let now $w' \neq w$ (recall that $\G$ is of girth $4$) be the neighbor of $v_1$ outside $C$. As argued above, both $(w',v_1,v_0)$ and $(w',v_1,v_2)$ lie on a $4$-cycle of $\Gamma$, and so $w'$ is adjacent to at least one of $v_3$ and $w$. It is easy to see that it has to be adjacent to both of them, implying that $\Gamma \cong K_{3,3} \cong \Ml(3)$.

For the rest of the proof we can thus assume that no two vertices contained in a $4$-cycle of $\Gamma$ share a neighbor outside this $4$-cycle. Let $C = (v_0,v_1,v_2,v_3)$ be as above, and for each $0 \leq i \leq 3$, let $u_i$ be the neighbor of $v_i$ outside $C$. We proceed by considering two cases.
\medskip

\noindent
{\bf Case 1:} {\em There is an $i$ such that $u_i$ is adjacent to $u_{i+1}$}
(indices computed modulo $4$). \\
Without loss of generality we may assume $u_0 \sim u_1$. If also $u_1 \sim u_2$ (or $u_0 \sim u_3$), then, once again, for any vertex $v$ of $\Gamma$ and any two of its neighbors $u,u'$, there is a $4$-cycle containing the $2$-path $(u,v,u')$.
It is clear that in this case $\Gamma \cong \Pr(4)$; the underlying graph of the cube.
If, on the other hand, we assume that $u_0 \sim u_1$, but none of $u_0 \sim u_3$ and $u_1 \sim u_2$ holds, the edges $v_1v_2$ and $v_1u_1$ each belong to a unique $4$-cycle of $\Gamma$, while $v_1v_0$ belongs to two. Since $\Gamma$ is vertex-transitive, each vertex of $\Gamma$ must be incident with one edge contained in two $4$-cycles of $\Gamma$ and with two edges each contained in just
one $4$-cycle of $\Gamma$. This divides the edges of $\Gamma$ into two
disjoint sets. Let us color the edges of $\Gamma$ lying within two $4$-cycles red and the other edges black (thus $v_0v_1$ is red, and $v_0v_3$ and $v_1v_2$ are black). Observe that no $4$-cycle contains two consecutive black edges as in such a case there would be no $4$-cycle through the remaining edge incident to the common endpoint of these two black edges. Thus $v_2v_3$ is red while $u_iv_i$ is black for all $0 \leq i \leq 3$. It follows that $u_2 \sim u_3$ and $u_2u_3$ and $u_0u_1$ are both red. It is now clear that $\Gamma$ is isomorphic either to the prism $\Pr(n)$ (in which case $n \geq 4$, for the girth to be $4$) or to the M\"{o}bius ladder $\Ml(n)$ with $n \geq 4$.
\medskip

\noindent
{\bf Case 2:} {\em For each $i$,  $u_i$ is adjacent to neither $u_{i-1}$ nor $u_{i+1}$.}
\\
It follows that $v_0$ (and hence any vertex of $\Gamma$) lies on a unique $4$-cycle of $\Gamma$. Each vertex of $\G$ is thus incident to two edges contained in
a unique $4$-cycle (we color such edges red), and to one edge that does not lie on any $4$-cycle of $\Gamma$ (we color such edges black). Thus, the red cycles 
form a
complete $2$-factor, the black edges form a complete $1$-factor of $\Gamma$, and both factors are preserved by the  automorphisms of $\Gamma$. Now, if $u_0$ were adjacent to $u_2$, then, since $v_0u_0$ is black, $u_0u_2$ would be red. Moreover, starting at $u_0$, traversing the black edge to $v_0$, taking (any) two consecutive red edges (contained in $C$), and then taking a black edge would 
bring us to the neighbor $u_2$ of  $u_0$ and thus give rise to a $5$-cycle. However, starting at $v_0$ and following the same sequence of colors of edges would imply that some red neighbor (a neighbor along a red edge) of $u_2$ would have to be adjacent to one of $v_1$ and $v_3$ via a black edge, forcing $u_2$ to be a neighbor of one of $u_1$ and $u_3$. This contradiction thus implies that $\{u_1, u_2, u_3, u_4\}$ is an independent set.

Let $w, w'$ be the two red neighbors of $u_0$ (see the left part of Figure~\ref{fig:girth4proof}) and let $z$ be their common red neighbor, different from $u_0$. Suppose first that $z \in \{u_1, u_3\}$; without loss of generality assume $z = u_1$. Letting $g \in \Aut(\Gamma)$ be such that it maps $u_0$ to $v_0$ we see that $u_1$ is mapped to $v_2$ (since $u_1$ is the antipodal vertex to $u_0$ on the unique $4$-cycle containing $u_0$ while $v_2$ is the antipodal vertex of $v_0$ on the unique $4$-cycle containing $v_0$). Then the black neighbor $v_0$ of $u_0$ is mapped by $g$ to $u_0$ and the black neighbor $v_1$ of $u_1$ is mapped to $u_2$. But since $v_0 \sim v_1$, it follows that $u_0 \sim u_2$, a contradiction that proves that $z \notin \{u_1, u_3\}$.
\begin{figure}[h]
\begin{center}
\includegraphics[scale=0.7]{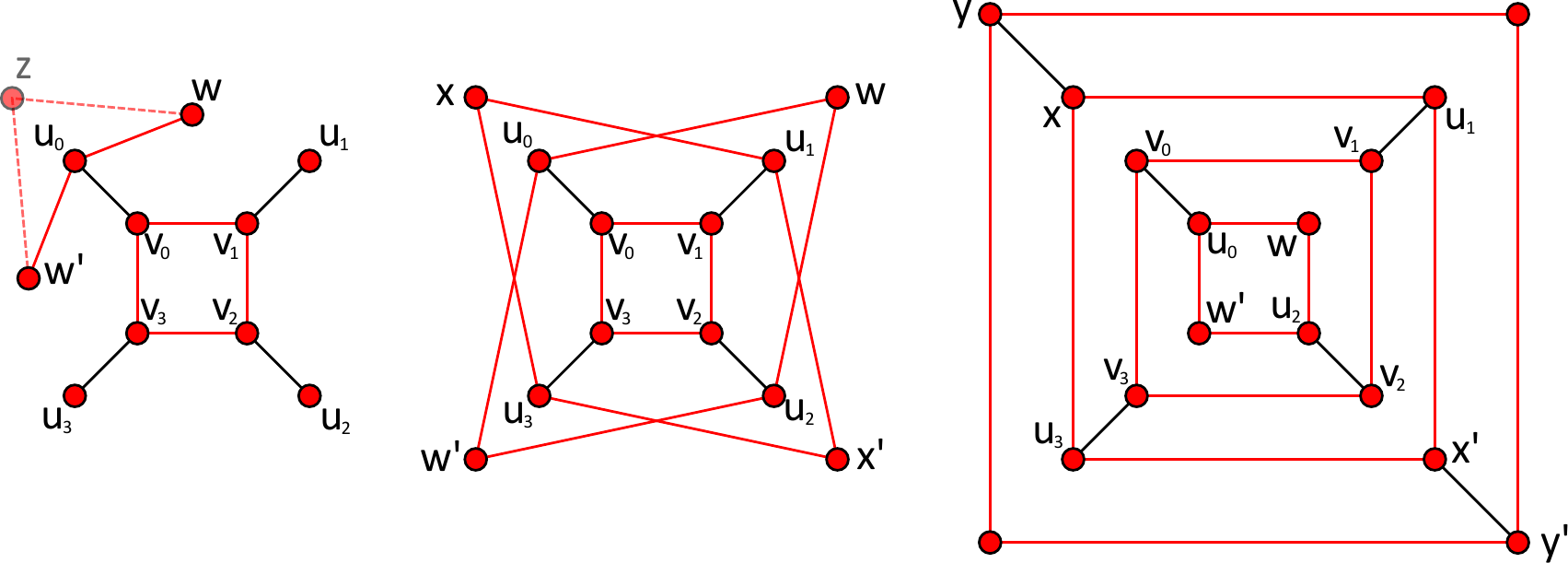}
\end{center}
\caption{The local situations around a $4$-cycle in the case that each vertex is on a unique $4$-cycle.}
\label{fig:girth4proof}
\end{figure}

Suppose next that $z = u_2$. Then $u_0$ and $u_2$, which are black neighbors of the antipodal pair $v_0, v_2$, are also antipodal on a $4$-cycle. Mapping $v_0$ to $v_1$ we thus see that $u_1$ and $u_3$ also must be antipodal on a (red) $4$-cycle. Their two common red neighbors $x$ and $x'$ are of course different from $w$ and $w'$ since both of $w$ and $w'$ already have their two red incident edges (see the middle part of Figure~\ref{fig:girth4proof}). Now, if at least one of $x$ and $x'$ is adjacent to one of $w$ and $w'$, say $x \sim w$, then the fact that antipodal pairs are connected with black edges to antipodal pairs implies that also $x' \sim w'$ must hold, and we get $\Gamma \cong \GPr(3)$. Otherwise, letting $y$ and $y'$ be the black neighbors of $x$ and $x'$, respectively, we see that they have a pair of common red neighbors different from $w$ and $w'$ (see the right part of Figure~\ref{fig:girth4proof}). Continuing this way, we easily conclude that $\Gamma \cong \GPr(\frac{n}{2})$, $n$ even. 

We are finally left with the possibility that the common black neighbor $z$ of $w$ and $w'$, different from $u_0$, is none of $u_1, u_2$ or $u_3$. But then each vertex $v_i$ of the $4$-cycle $C$ is adjacent to precisely one vertex outside $C$ and no two vertices of $C$ have a neighbor in the same $4$-cycle $C'$, different from $C$. Since the $4$-cycles of $\Gamma$ clearly form an imprimitivity block system for the automorphism group $\Aut(\Gamma)$, we can apply Theorem~\ref{the:VTtrunc}. The final claim $\Aut(\Gamma) \cong \Aut(\Lambda)$ follows from Corollary~\ref{cor:small_girth} and Theorem~\ref{the:VTfromAT}.
\end{proof}

\begin{remark}
It is well known that there exist infinitely many tetravalent arc-transitive graphs (see for instance~\cite{Wil08}), and so Theorem~\ref{the:girth4} implies that there exist infinitely many cubic vertex-transitive graphs of girth $4$, not isomorphic to a generalized prism, having the property that each vertex lies on a unique $4$-cycle.
\end{remark}

\subsection{Girth 5}
\label{subsec:girth5}

We conclude with a characterization of cubic vertex-transitive graphs of girth $5$. In what follows, we take advantage of the obvious fact that in a graph of girth $5$ no $3$-path can be contained in more than one $5$-cycle. In case of cubic graphs, 
this observation implies that any $2$-path is contained in at most two $5$-cycles. 

\begin{theorem}
\label{the:girth5}
Let $\Gamma$ be a connected cubic graph of girth $5$. Then $\Gamma$ is vertex-transitive if and only if it is either isomorphic to the Petersen graph or the Dodecahedron graph, or it is isomorphic to a generalized truncation of an arc-transitive $5$-valent graph $\Lambda$ by the $5$-cycle $C_5$ in the sense of Theorem~\ref{the:VTfromAT}. In the latter case, $\Aut(\Gamma) \cong \Aut(\Lambda)$.
\end{theorem}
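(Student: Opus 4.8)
The plan is to follow the template of Theorems~\ref{the:girth3} and~\ref{the:girth4}. For the forward direction, the Petersen and Dodecahedron graphs are the two connected cubic arc-transitive (hence vertex-transitive) graphs of girth $5$ recalled in the introduction, and any generalized truncation of a $5$-valent arc-transitive graph $\Lambda$ by $C_5$ in the sense of Theorem~\ref{the:VTfromAT} is vertex-transitive by that theorem. Its girth is exactly $5$: the red copies of $C_5$ are $5$-cycles, while every cycle through a blue edge has length at least $2\,\mathrm{girth}(\Lambda)\ge 6$ by Lemma~\ref{le:blue_cycle}. Since the inserted $C_5$ has girth $5<2\,\mathrm{girth}(\Lambda)$, Corollary~\ref{cor:small_girth} applies, and together with Theorem~\ref{the:VTfromAT} it yields $\Aut(\Gamma)\cong\Aut(\Lambda)$, exactly as in the girth-$4$ case.

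For the converse, suppose $\Gamma$ is vertex-transitive. I would fix a $5$-cycle $C=(v_0,\dots,v_4)$ and, for each $i$, let $u_i$ be the unique neighbour of $v_i$ outside $C$ (unique by cubicity, and outside $C$ since a chord would create a cycle shorter than $5$). A short girth argument shows the $u_i$ are pairwise distinct and that the only possible adjacencies among them are of the form $u_i\sim u_{i+2}$; moreover $u_i\sim u_{i+2}$ holds precisely when the $2$-path $(v_i,v_{i+1},v_{i+2})$ lies on a second $5$-cycle, namely $(v_i,v_{i+1},v_{i+2},u_{i+2},u_i)$. Using that no $3$-path lies on two $5$-cycles, each $2$-path lies on at most two $5$-cycles, so by vertex-transitivity the multiset of these counts over the three $2$-paths at a vertex is a constant of $\Gamma$. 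The proof then splits according to whether every vertex lies on a \emph{unique} $5$-cycle.

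If it does, the $5$-cycles partition $V(\Gamma)$ into blocks of size $5$ forming an imprimitivity block system $\cB$ for $\Aut(\Gamma)$, and I would verify the hypotheses of Theorem~\ref{the:VTtrunc}. That each vertex of a block $B$ has exactly one neighbour outside $B$ is immediate from cubicity and the absence of chords. The crucial condition is that no two vertices of $B$ have a neighbour in the same block $B'\neq B$. Since $\Aut(\Gamma)$ is vertex-transitive and $\cB$ is a block system, the setwise stabiliser of $B$ acts transitively on the five vertices of $B$, hence as a transitive subgroup of $\Aut(C_5)$, and so contains the rotation $\sigma\colon v_i\mapsto v_{i+1}$. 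If two vertices of $B$ had neighbours in a common block, a girth computation forces them to be at distance $2$ on $B$, say $\{v_0,v_2\}$; but then $\sigma^3$ maps this pair to the pair $\{v_3,v_0\}$, which is therefore also ``doubled''. As both pairs contain $v_0$, its single outside neighbour $u_0$ would share a block with the outside neighbours of both $v_2$ and $v_3$; since $v_2\sim v_3$, this produces a cycle of length at most $5$ through two blue edges, contradicting girth $5$ or the uniqueness of $5$-cycles. (This is exactly where the oddness of $5$ is used: on an odd cycle every vertex lies in two distance-$2$ pairs, whereas on the even cycles of the girth-$4$ case the analogous configuration survives and yields the generalized prisms.) Hence Theorem~\ref{the:VTtrunc} applies, $\Gamma$ is a truncation in the sense of Theorem~\ref{the:VTfromAT}, and $\Aut(\Gamma)\cong\Aut(\Lambda)$ follows from Corollary~\ref{cor:small_girth} and Theorem~\ref{the:VTfromAT}.

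The remaining, and hardest, case is when some vertex lies on at least two $5$-cycles, equivalently some adjacency $u_i\sim u_{i+2}$ occurs. Here I would work in the ball of radius $2$ about a vertex, which has the full $1+3+6=10$ vertices by girth $5$, and propagate the local constraints using vertex-transitivity. When every $2$-path lies on two $5$-cycles, the six distance-$2$ vertices are forced to have all their incident edges inside the ball, so the ball is the whole graph and $\Gamma$ is the unique cubic graph of girth $5$ on $10$ vertices, the Petersen graph. When every $2$-path lies on exactly one $5$-cycle, each edge lies on exactly two $5$-cycles, these $5$-cycles become the faces of a cubic pentagonal map on a closed surface, and Euler's formula (which gives Euler characteristic $\mathrm{girth}$-independent value $E/15>0$) forces the sphere and hence the Dodecahedron. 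The main obstacle is eliminating the intermediate \emph{mixed} multisets, those containing both a $0$ and a positive entry, or a $2$ together with a $1$. I expect these to require a finite but delicate local case analysis of the kind above: one tracks how the two $5$-cycles through a ``heavy'' $2$-path must enter a neighbouring vertex through \emph{distinct} $2$-paths (which already rules out the multiset $(2,0,0)$), and then shows that every remaining non-uniform local pattern cannot be completed to a vertex-transitive graph of girth $5$.
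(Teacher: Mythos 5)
Your forward direction is correct, and your treatment of the case in which every vertex lies on a unique $5$-cycle is also correct; there you take a genuinely different route from the paper. The paper verifies the hypothesis of Theorem~\ref{the:VTtrunc} (no two vertices of a $5$-cycle $C$ attach to the same other $5$-cycle) by a local red/black edge-colouring argument, whereas you use the rotation contained in the setwise stabiliser of a block to transport one ``doubled'' distance-$2$ pair of attachment vertices to another, producing a short cycle through two blue edges. That group-theoretic argument is valid (the stabiliser of a block does act on it as a transitive subgroup of the dihedral group of order $10$, hence contains the rotation) and is arguably cleaner than the paper's.

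The genuine gap is in the case where some vertex lies on more than one $5$-cycle. Writing $(c_1,c_2,c_3)$ for the numbers of $5$-cycles through the three $2$-paths at a vertex (a multiset independent of the vertex, by vertex-transitivity), your Petersen and Dodecahedron arguments cover only $(2,2,2)$ and $(1,1,1)$, and your observation that two $5$-cycles through a $2$-path must enter a neighbouring vertex through distinct $2$-paths rules out only $(0,0,2)$. You explicitly defer $(0,1,1)$, $(0,1,2)$, $(0,2,2)$, $(1,1,2)$ and $(1,2,2)$ to ``a finite but delicate local case analysis'' that you do not carry out --- and this is precisely where the bulk of the paper's proof lies. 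The paper first proves that if some $2$-path lies on two $5$-cycles then every $2$-path lies on at least one (a nontrivial propagation argument through the neighbours $w,w'$ of $u_1$ and the neighbours $x,x'$ of $w$, ending in two $5$-cycles through a common $3$-path), which disposes of $(0,1,2)$ and $(0,2,2)$; it then eliminates $(1,2,2)$ by a red/black colouring argument (edges on four versus three $5$-cycles, no three consecutive black edges on a $5$-cycle, forcing a forbidden all-black $5$-cycle), and $(1,1,2)$ by a colouring argument combined with the counting contradiction $4n=5s$ versus $n=s$; the pattern $(0,1,1)$ is absorbed by the Dodecahedron analysis. None of these steps is routine, so the core difficulty of the theorem is absent from your proposal. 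Separately, your Euler-characteristic route in the $(1,1,1)$ case has a fixable but real hole: you must first check that the $5$-cycles do form a closed-surface map (the link at each vertex is a triangle, which does follow from $(1,1,1)$), and then $\chi=n/10>0$ allows the projective plane as well as the sphere; for $\chi=1$ one gets $n=10$, hence the Petersen graph, whose $2$-paths each lie on two $5$-cycles, a contradiction --- only then does the sphere, and with it the Dodecahedron, follow.
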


\begin{proof}
The Petersen graph and the Dodecahedron graph are vertex-transitive cubic graphs of girth $5$. All other graphs from the theorem are vertex-transitive cubic graphs of girth $5$ because of Theorem~\ref{the:VTfromAT}. 

To prove the converse, let $C = (v_0, v_1, v_2, v_3, v_4)$ be a $5$-cycle of $\Gamma$. Since $\Gamma$ is cubic and of girth $5$, each of the vertices $v_i$ has its own unique neighbor $u_i$ outside $C$. We distinguish two cases depending on whether the set $\{u_1, u_2, u_3, u_4, u_5 \}$ is independent or not. 
\medskip

\noindent
{\bf Case 1:} {\em There is at least one edge connecting two of the vertices $u_i$.}\\
We show that in this case $\Gamma$ is the Petersen graph. To this end, suppose $u_0$ is adjacent to at least one of $u_2$ and $u_3$, say to $u_2$ (since $\Gamma$ is of girth $5$, $u_0$ cannot be adjacent to $u_1$ or $u_4$). This implies that each edge of $\Gamma$ lies on at least one $5$-cycle (since this holds for all three edges incident to $v_0$). Moreover, the $2$-path $(v_0,v_1,v_2)$ lies on exactly two $5$-cycles of $\Gamma$, and so each vertex of $\Gamma$ is the internal vertex of at least one $2$-path which is contained on two $5$-cycles. We claim that this implies that each $2$-path of $\Gamma$ lies on at least one $5$-cycle. If this were
not the case, then we would lose no generality by assuming that $(u_1,v_1,v_0)$ lies on no $5$-cycle of $\Gamma$. By vertex-transitivity, each vertex of $\Gamma$ is an internal vertex of at least one $2$-path not contained on any $5$-cycle. Let $w$ and $w'$ be the two neighbors of $u_1$ different from $v_1$. Since $(u_1,v_1,v_0)$ is not contained on any $5$-cycle, the only possible $2$-path with $u_1$ as its internal vertex which can lie on two $5$-cycles is $(w,u_1,w')$. Since there must also be a $2$-path with internal vertex $u_1$ that lies on no $5$-cycle, this holds for one of the $2$-paths $(w,u_1,v_1)$ and $(w',u_1,v_1)$. Without loss of generality assume that it is $(w,u_1,v_1)$. Let now $x$ and $x'$ be the two neighbors of $w$ different from $u_1$. As before, $(x,w,x')$ must be on two $5$-cycles, and without loss of generality, $(x,w,u_1)$ lies on no $5$-cycle. But then both $5$-cycles containing $(w',u_1,w)$ contain $(w',u_1,w,x')$, a contradiction. This proves our claim that each $2$-path of $\Gamma$ lies on at least one $5$-cycle. 

Now, if each $2$-path of $\Gamma$ lies on two $5$-cycles then $\Gamma$ is the Petersen graph (since each of the paths $(v_i,v_{i+1},v_{i+2})$ lies on the $5$-cycle $C$ the other $5$-cycle containing this $2$-path must be $(u_i,v_i,v_{i+1},v_{i+2},u_{i+2})$, and so each $u_i$ as adjacent to $u_{i+2}$ and $u_{i-2}$). We are thus left with the possibility that either each vertex is the internal vertex of one $2$-path lying on one $5$-cycle and of two $2$-paths lying on two $5$-cycles, or each vertex is the internal vertex of one $2$-path lying on two $5$-cycles and of two $2$-paths lying on one $5$-cycle. 

We first show that the first possibility cannot occur. Suppose to the contrary that each vertex of $\G$ is the internal vertex of one $2$-path lying on one $5$-cycle and of two $2$-paths lying on two $5$-cycles. Then each vertex of $\Gamma$ lies on five $5$-cycles and is incident to one edge lying on four $5$-cycles (we color such edges red) and to two edges each lying on three $5$-cycles (we color such edges black). Note that no $5$-cycle contains three consecutive black edges since if $(w_1,w_2,w_3,w_4)$ was on a $5$-cycle with $w_1w_2$, $w_2w_3$ and $w_3w_4$ all black, then each of $(w_1,w_2,w_3)$ and $(w_2,w_3,w_4)$ would be on just one $5$-cycle. But then $(w,w_2,w_3)$ cannot be on two $5$-cycles, where $w$ is the red neighbor of $w_2$. Since no two red edges are incident this implies that each $5$-cycle of $\Gamma$ consist of two red edges and three black edges. Now, let $C' = (w_0,w_1,w_2,w_3,w_4)$ be a $5$-cycle and with no loss of generality assume that the edges $w_0w_4$, $w_0w_1$ and $w_2w_3$ are black while $w_1w_2$ and $w_3w_4$ are red (see the left part of Figure~\ref{fig:girth5proof}). Let $x_0$ be the red neighbor of $w_0$ and let $x_2$ and $x_3$ be the black neighbors of $w_2$ and $w_3$, different from $w_3$ and $w_2$, respectively. The $2$-path $(x_0,w_0,w_1)$ lies on two $5$-cycles of $\Gamma$, one of which must contain $(x_0,w_0,w_1,w_2)$. Since the successor of $w_2$ on this $5$-cycle cannot be $w_3$ (as $(w_0,w_1,w_2,w_3)$ already lies on $C'$) we see that $x_0 \sim x_2$. Similarly $x_0 \sim x_3$. But since $x_0x_2$ and $x_0x_3$ are both black this gives a black $5$-cycle $(x_0,x_2,w_2,w_3,x_3)$, a contradiction.
\begin{figure}[h]
\begin{center}
\includegraphics[scale=0.7]{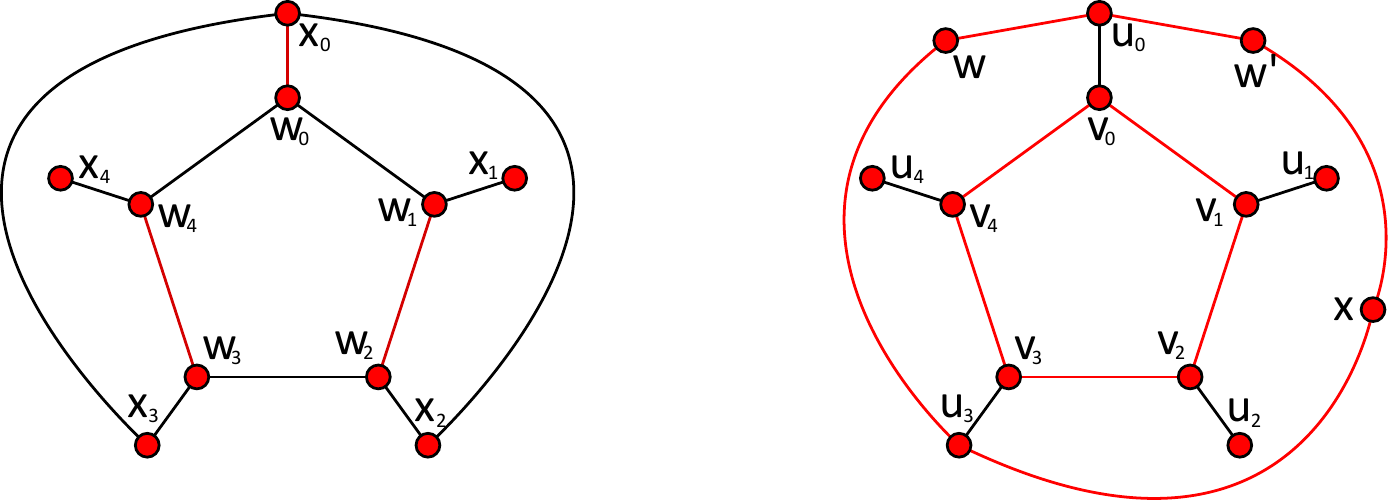}
\end{center}
\caption{The local situations around a $5$-cycle.}
\label{fig:girth5proof}
\end{figure}

We now show that the second possibility also cannot occur. Suppose to the contrary that each vertex is the internal vertex of one $2$-path lying on two $5$-cycles and of two $2$-paths lying on one $5$-cycle. Similarly as above we find that each vertex of $\Gamma$ lies on four $5$-cycles and that each vertex is incident to one edge lying on two $5$-cycles (we color such edges red) and to two edges each lying on three $5$-cycles (we color these black). In particular, $4n = 5s$, where $n$ is the order of $\G$ and $s$ is the number of $5$-cycles of $\G$. We clearly cannot have a black $5$-cycle since then $\Gamma$ is the Petersen graph in which all $2$-paths are on two $5$-cycles, and a similar argument as above shows that no black edge on a $5$-cycle can be surrounded by two red edges. It follows that each $5$-cycle of $\G$ consists of one red and four black edges. Since red edges are contained on two $5$-cycles (and there is $n/2$ of them) this implies $n = s$, contradicting $4n = 5s$. 
This finally completes Case 1.
\medskip

\noindent
{\bf Case 2:} the set $\{u_i \colon 1 \leq i \leq 5\}$ is an independent set.\\
Suppose first that $v_0$ (and thus any vertex of $\Gamma$) lies on more than one $5$-cycle. Since there are no edges between the vertices $u_i$ the only possibility is that either $u_0$ and $u_1$ or $u_0$ and $u_4$ have a common neighbor; without loss of generality assume $u_0$ and $u_1$ do. Since $v_3$ also lies on at least two $5$-cycles either $u_3$ and $u_2$ or $u_3$ and $u_4$ have a common neighbor. It is now easy to see that $\Gamma$ is the Dodecahedron graph.

We are thus left with the possibility that each vertex lies on a unique $5$-cycle, and so each vertex is incident to two edges lying on a unique $5$-cycle (we color such edges red) and to one edge that does not lie on a $5$-cycle (we color these black). Each $5$-cycle of $\G$ thus consist of five black edges. Now, let $C$, $v_i$ and $u_i$ be as at the beginning of this proof and let $w$ and $w'$ be the two red neighbors of $u_0$. Since there is no $5$-cycle through $u_0v_0$ none of $w, w'$ is adjacent to any of $u_1$ and $u_4$. In fact, none of $w, w'$ is adjacent to any of the $u_i$, $i \neq 0$. For, if this was the case, say $w \sim u_3$, then $u_3$ and $w'$ would have a common black neighbor, say $x$ (see the right part of Figure~\ref{fig:girth5proof}). But then the black neighbors $u_0$ and $u_3$ of the two red neighbors of $v_4$ would have a common red neighbor (namely $w$), and so the same should hold for the black neighbors $u_0$ and $u_2$ of the two red neighbors of $v_1$. However, as both $w$ and $w'$ already have both of their red neighbors, this is impossible.

It thus follows that the red $5$-cycle containing $u_0$ contains none of the vertices $u_i$, $i \neq 0$. Thus, each vertex of the $5$-cycle $C$ has precisely one neighbor outside $C$ and no two vertices of $C$ have a neighbor in the same $5$-cycle $C'$, different from $C$. Since the $5$-cycles of $\Gamma$ clearly form an imprimitivity block system for the automorphism group $\Aut(\Gamma)$ we can apply Theorem~\ref{the:VTtrunc}. The isomorphism $\Aut(\Gamma) \cong \Aut(\Lambda)$ follows from Corollary~\ref{cor:small_girth} and Theorem~\ref{the:VTfromAT}.
\end{proof}

\begin{remark}
It is well known that there exist infinitely many $5$-valent arc-transitive graphs (see for instance~\cite{AntHujKut15}), and so Theorem~\ref{the:girth5} implies that there exist infinitely many cubic vertex-transitive graphs of girth $5$.
\end{remark}

\end{document}